\newcommand\fullwidthdisplay{\displayindent0pt \displaywidth\columnwidth}
  \everydisplay\expandafter{\expandafter\fullwidthdisplay\the\everydisplay}
\newtheorem{assumption}[theorem]{Assumption}
\newtheorem{remark}[theorem]{Remark}
\newtheorem{problem}[theorem]{Problem}
\numberwithin{theorem}{section}
\numberwithin{equation}{section}
\renewcommand{\cal}[1]{\mathcal{#1}}
\renewcommand{\r}{\mathbb{R}}
\newcommand{\n}{\mathbb{N}}
\newcommand{\zp}{\mathbb{Z}_+}
\newcommand{\mmag}[1]{\left|#1\right|}
\newcommand{\s}{\mathcal{S}}
\renewcommand{\L}{\mathcal{L}}
\newcommand{\e}{\mathcal{E}}
\newcommand{\x}{\mathcal{X}}
\newcommand{\y}{\mathcal{Y}}
\newcommand{\twx}{2^\mathcal{X}}
\newcommand{\supp}[1]{\operatorname{supp}\left(#1\right)}
\newcommand{\norm}[1]{\left|\left|{#1}\right|\right|}
\newcommand{\Pbl}[1]{\mathbb{P}_\gamma\left(#1\right)} %
\newcommand{\Ebl}[1]{\mathbb{E}_\gamma\left[#1\right]} %
\newcommand{\Pbp}[1]{\mathbb{P}_\pi\left(#1\right)}
\title{Approximations of countably-infinite linear programs over bounded measure spaces\thanks{JK was supported by a BBSRC PhD Studentship (BB/F017510/1). GBS acknowledges support by an EPSRC Fellowship for Growth (EP/M002187/1) and the EU H2020-FETOPEN-2016-2017 project 766840 (COSY-BIO). MB acknowledges support from EPSRC grant EP/N014529/1 supporting the EPSRC Centre for Mathematics of Precision Healthcare.}
}
\author{
  Juan Kuntz\thanks{Department of Mathematics and Department of Bioengineering, Imperial College London, London SW7 2AZ, United Kingdom.  \, \textit{Current address:} Department of Statistics,  University of Warwick, Coventry, CV4 7AL,  United Kingdom 
    (\email{juan.kuntz-nussio@warwick.ac.uk}).}
  \and
  Philipp Thomas\thanks{Department of Mathematics, Imperial College London, London SW7 2AZ, United Kingdom (\email{p.thomas@imperial.ac.uk}).}
  \and
  Guy-Bart Stan\thanks{Department of Bioengineering, Imperial College London, London SW7 2AZ, United Kingdom
    (\email{g.stan@imperial.ac.uk}).}%\thanks{Department of Bioengineering, Imperial College London, SW7 2AZ, UK
    %(\email{ddoe@imag.com}).}
  \and
  Mauricio Barahona\thanks{Department of Mathematics, Imperial College London, London SW7 2AZ, United Kingdom
    (\email{m.barahona@imperial.ac.uk}).}
}
\begin{document}
\maketitle
% REQUIRED
\begin{abstract}
We study a class of countably-infinite-dimensional linear programs (CILPs) 
whose feasible sets are bounded subsets of appropriately defined spaces of measures. {The optimal value, optimal points, and minimal points of these CILPs can
be approximated by solving finite-dimensional linear programs. We show how to construct finite-dimensional programs that lead to approximations with easy-to-evaluate error bounds, and we prove that the errors converge to zero as the size of the finite-dimensional programs approaches that of the original problem.} 
We discuss the use of our methods in the computation of the stationary distributions, occupation measures, and exit distributions of Markov~chains.
%%% OLD
%
%We study a class of countably-infinite-dimensional linear programs (CILPs) 
%whose feasible sets are bounded subsets of appropriately defined weighted spaces of measures. We show how to approximate the optimal value, optimal points, and minimal points of these CILPs by solving finite-dimensional linear programs. The errors of our approximations converge to zero as the size of the finite-dimensional program approaches that of the original problem and are easy to bound in practice. 
%We discuss 
%the use of our methods in the computation of the stationary distributions, occupation measures, and exit distributions of Markov~chains.
\end{abstract}

% REQUIRED
\begin{keywords}
Countably-infinite linear programs, outer approximations, moment bounds, Markov chains, stationary distributions, exit distributions, occupation measures
\end{keywords}

% REQUIRED
\begin{AMS}
90C05, 90C90, 90C06, 60J10, 60J22, 60J27, 65C40, 65G20
\end{AMS}

%\begin{funding}
%The   first   author’s   work   was   supported   by   a   BBSRC   Ph.D.   studentship (BB/F017510/1).
%\end{funding}

\section{Introduction}\label{sec:intro}

A countably infinite linear program (CILP) is a linear program (LP) with countably many decision variables and constraints.
 CILPs arise in a variety of applications, e.g.\ network flow problems~\cite{Romeijn2006,Sharkey2008,Ryan2018}, production planning~\cite{Smith1998,Huang2005}, equipment replacement and capacity expansion~\cite{Jones1988,Huang2005}, semi-infinite linear programs~\cite{Anderson1987,Lerma1998}, search problems in robotics~\cite{Demaine2006}, robust optimisation~\cite{Ghate2015}, and, prominently, in optimal control problems tied to Markov chains~\cite{Puterman1994,Hernandez-Lerma1999,Altman1999,Feinberg2002,Guo2011}. 
  
Here, we consider a class of CILPs stemming from the analysis of Markov chains with countably infinite state spaces.
Certain measures {$\rho$} associated with Markov chains (e.g.\ the stationary distributions and occupation measures) are feasible points of CILPs, and the optimal values of these CILPs bound {integrals of the measures of the form $\rho(f):=\int f \, d\rho$}, where $f$ denotes a $\rho$-integrable function.
Typically, the CILPs cannot be solved directly, yet their feasible sets often lie inside a bounded space of measures
{(see Remark~\ref{rem:boundedspace})}, a fact that can be made explicit by finding moment bounds satisfied by the measures {(i.e.\ bounds on $\rho(w)$ for norm-like functions $w$, as in~\eqref{eq:wfunc0}--\eqref{eq:wfunc})}.
%(i.e., bounds on integrals of unbounded test functions with respect to the measures).
%
Such bounds can be obtained using either Foster-Lyapunov criteria~\cite{Meyn1993b,Meyn2009,Glynn2008,Kuntzthe,Kuntz2019a} or mathematical programming~\cite{Hernandez-Lerma2003,Schwerer1996,Helmes2001,Kuntzthe,Kuntz2019,Sakurai2017,Dowdy2017,Dowdy2018,Ghusinga2017a}.

In this paper, we derive finite-dimensional LPs that approximate a given CILP by using a moment bound to truncate the (infinite) set of decision variables and replace its (infinite) set of constraints with finitely many.
Based on the ensuing finite-dimensional LPs, we introduce two approximation schemes: 
\begin{itemize}
\item Scheme A yields lower and upper bounds on the optimal value of the CILP and approximations of its optimal points; 
\item Scheme B yields approximations of the minimal point (i.e.\ a feasible point that is element-wise no greater than every other feasible point), if it exists, 
with an easily computable bound on the error. 
\end{itemize}
Both schemes are shown to converge 
as the size of the approximating LP approaches that of the original CILP {(i.e.\ as $r$ in~\eqref{eq:lpap} approaches infinity).}
The moment bound also allows us to quantify the error of the approximations.

{ The idea of approximating infinite-dimensional LPs with finite-dimensional ones is not new: it traces back to~\cite{Vershik1968}, if not earlier, and has been extensively explored (see, e.g.\  \cite{Altman1994,Altman1999,Ghate2010,Guo2014,Grinold1977} for CILPs, and also \cite{Puterman1994,Prieto2009,Cavazos-Cadena1986,Prieto2012} for similar approximation techniques involving dynamic programming). The main novelty of our work lies in the practical error bounds that accompany our schemes. 
In this paper, we have three aims: (i) to present the schemes and their error bounds in a general setting, clarifying the ingredients necessary for their design (see Section~\ref{conclu}); (ii) to provide the proofs and technical details pertaining to their convergence 
%(Sections~\ref{sec:A}--\ref{sec:B}) or 
(see, in particular, Theorems~\ref{solvable},~\ref{convlpt},~and~\ref{lconvlpt} and Corollaries~\ref{cor:optimal}~and~\ref{uniqueth4}); 
and (iii) to discuss the use of these methods in the exit problem for Markov chains (Sections~\ref{applications}~and~\ref{applicationsrev}).
The technical proofs and theorems presented in this paper provide the theoretical foundation for the applications presented in~\cite{Kuntz2019}, where we demonstrated without proofs the practical use of the schemes in a particular setting of interest (i.e.\ the approximation of stationary distributions of stochastic reaction networks). 
%
%and, in addition, we introduced an approach involving the Lasserre hierarchy for polynomial optimization~\cite{Lasserre2009} that yields the prerequisite moment bounds for that application.
%Our motivation in this paper is three-fold:
% Before proceeding, we should clarify here 

We remark that our schemes are not reliant on the Lasserre hierarchy~\cite{Laurent2009,Lasserre2009}. Specifically:
%, extensively explored over the last two decades by J.~B.~Lasserre and collaborators. 
the schemes do not use moment and localising matrices to constrain their feasible sets; the problem data need not be polynomial (or 
more generally, have any algebraic structure); and the convergence proofs do not require \textit{positivstellens\"atze}. 
If the problem data is polynomial, then our schemes can use moment bounds obtained with the Lasserre hierarchy to: (i) tackle problems beyond the reach of the Lasserre hierarchy
%on their own 
(e.g.\ to produce bounds %$\rho(f)$ 
for non-polynomial test functions); %$f$), 
(ii) recover converging bounds in cases where the hierarchy has no convergence guarantees and its progress stalls (see \cite[Section V.B]{Kuntz2019} for an example). 
In this sense, the schemes proposed here enlarge the already impressive list of applications of the Lasserre hierarchy.}

The paper is organised as follows. In Section~\ref{thelp}, we introduce the class of CILPs we consider, and we motivate our work with two problems taken from the Markov chain literature. In Section~\ref{sec:A}, we present approximations of the CILPs' optimal values and points, prove their convergence, and obtain
practically useful 
error bounds for the optimal values. 
In Section~\ref{sec:B}, we derive approximations of the  CILPs' minimal points and show how to bound their error. 
In Section~\ref{sec:schemes}, we present two numerical procedures (Schemes A and B) to compute these approximations. Afterwards, in Section~\ref{applicationsrev}, we describe their application to the motivating problems of Section~\ref{thelp}. We conclude the paper with a discussion in Section~\ref{conclu}. { The paper has three appendices: Appendix~\ref{appendix} contains the more technical proofs; Appendix~\ref{wtv} clarifies the relationship between the various notions of convergence we use; Appendix~\ref{shedvarfin} explains how to dispense of a technical assumption in our results, which is only assumed to simplify the exposition.}  
%overcome Assumption~\ref{assumption}$(v)$ made throughout the main text to simplify the exposition.}

\section{The linear program and two motivating problems}\label{thelp} 
\subsection{Statement of the problem} 
Let $\x,\y$ denote two countable (possibly infinite) sets. In various settings, we are interested in  linear programs where the solutions $\rho=(\rho(x))_{x \in \x}$ lie in $\ell^1$, the space of absolutely summable sequences indexed by $\x$,
$$\ell^1:=\left\{ \left(\rho(x)\right)_{x \in \x}:\sum_{x\in\x}\mmag{\rho(x)}<\infty\right\},$$
and satisfy the following properties:
\begin{enumerate}[(i)]
\item $\rho$ is non-negative: 
\begin{equation}
\label{eq:positive_measure}
\rho(x) \geq 0 \quad \forall x \in \x;
\end{equation}
\item $\rho$ satisfies a system of linear equations
\begin{equation}
\label{eq:lineqs1}
\rho H(x) :=\sum_{x'\in\x}\rho(x') h(x',x) = \phi(x) \quad\forall x\in\x,\end{equation}
where $H:=(h(x',x))_{x,x'\in\x}$ is a given Metzler matrix {(c.f. Assumption~\ref{assumption}$(i)$)} 
and $\phi:=(\phi(x))_{x\in\x}$ a given measure in $\ell^1$; 
\item the image of $\rho$ under a given non-negative matrix $G:=(g(x,y))_{x\in\x,y\in\y}$, 
\begin{equation}
\label{eq:psi}
\psi(y):=\rho G(y)=\sum_{x\in\x} \rho(x)g(x,y)\quad \forall y\in\y, 
\end{equation}
is a probability distribution; that is, $\psi$ is non-negative and has unit mass:  
\begin{align}
\label{eq:gsums1}
&\rho(g):=   \sum_{x\in\x}g(x)\rho(x) = \sum_{y\in\y} \psi(y) = 1, \\ 
\text{where}& \qquad  g(x) :=\sum_{y\in\y}g(x,y) \quad \forall x\in\x. \nonumber
\end{align}
Here, $\rho(g)$ denotes the $\rho$-integral of $g$.
\end{enumerate}
If $\x$ and $\y$ are infinite (or very large), linear programs constrained by~\eqref{eq:positive_measure}--\eqref{eq:gsums1} cannot be solved directly.

In many important cases, it is possible to find a 
real-valued function $w$ that fulfills two properties:
\begin{enumerate}[(i)]
\item $w$ is \emph{norm-like}:
\begin{itemize}  
\item it is non-negative,  
\begin{equation}\label{eq:wfunc0}
w(x) \geq 0 \quad \forall x \in \x,
\end{equation}
\item and it has finite sublevel sets, 
\begin{equation}
\label{eq:wfunc}
\x_r:=\{x\in\x:w(x)<r\} \quad \forall r\in\zp;
\end{equation}
\end{itemize}
\item the $\rho$-integral of $w$ fulfills a so-called \textit{moment bound}: 
\begin{equation}
\label{eq:ggmomb}
\rho(w)=\sum_{x\in\x}w(x)\rho(x)\leq c,
\end{equation} 
where $c$ is a constant.
Bounds of this type
can be obtained through various analytical
and numerical  
methods. 
For instance, the moment bounds for the applications in this paper
can be computed using either Foster-Lyapunov criteria~\cite{Glynn2008,Meyn2009,Meyn1993b,Kuntzthe,Kuntz2019a} or mathematical programming approaches, i.e.\ by solving linear or semidefinite programs~ \cite{Hernandez-Lerma2003,Helmes2001,Kuntzthe,Schwerer1996,Kuntz2019,Sakurai2017,Dowdy2017,Dowdy2018,Ghusinga2017a}.
\end{enumerate}
When such a function $w$ and constant $c$ are found,
the solutions $\rho$ 
belong to a set constrained by linear equalities and inequalities:
\begin{equation}
\label{eq:l2}
\L:=\left\{\rho\in\ell^1: \begin{array}{l} \rho H(x)=\phi(x)\quad\forall x\in\x,\\ \rho (g)=1,\\ \rho(w)\leq c,\\\rho(x) \geq 0 \quad \forall x \in \x.
\end{array}\right\}{.} 
\end{equation}
%
%
%i.e.\ the feasible set of a linear program with $\mmag{\x}$ decision variables, $\mmag{\x}+1$ equality constraints, and $\mmag{\x}+1$ inequality constraints, where $\mmag{\x}$ denotes the (possibly infinite) cardinality of $\x$.
%
Our aim is to solve linear programs {whose feasible set is} $\mathcal{L}$. Specifically, our problems of interest can be stated as follows.
\begin{problem}[CILPs over measures satisfying moment bounds] 
\label{prob:CILP}
We wish to optimise the $\rho$-integral of a given real-valued function $f:\x \to \r$ over the set of measures $\mathcal{L}$, i.e.\ to compute the optimal values $l_f, u_f $ in
\begin{align}
\label{eq:boundconvthe1}
& l_f := \inf\{\rho(f):  \rho\in\L\},\\
\label{eq:boundconvthe2}
& u_f := \sup\{\rho(f):  \rho\in\L\}.
\end{align}
Should they exist, we are also interested in finding optimal points $\rho_*,\rho^*\in\cal{L}$ achieving the optimal values: 
$$\rho_*(f)=l_f \quad \text{and} \quad  \rho^*(f)=u_f,$$ 
as well as the minimal point $\rho_m\in\cal{L}$ (should it exist): the unique feasible point that is element-wise no greater than all feasible points,
\begin{equation}
\label{eq:minimal}
\rho_m(x)\leq \rho(x)\quad\forall x\in\x, \enskip \forall \rho\in\L.
\end{equation}
\end{problem}

Here we study approximations of the  
CILPs~\eqref{eq:boundconvthe1}--\eqref{eq:boundconvthe2} 
under the following general assumption.
\begin{assumption}\label{assumption}
Consider a CILP over the set $\L$~\eqref{eq:l2} defined by the real matrices $H:=(h(x',x))_{x',x\in\x}$ and $G:=(g(x,y))_{x\in\x,y\in\y}$, and the measure $\phi\in\ell^1$.
We assume that we have available a 
norm-like function $w:\x \to [0,\infty)$ such that:
\begin{enumerate}[label=(\roman*)]
\item $H$ is Metzler:
$h(x',x)\geq 0,\enskip\forall x'\neq x \in \x.$
\item $G$ is nonnegative:
$g(x,y)\geq0, \enskip \forall x\in\x, \enskip \forall y\in\y$.
\item 
There exist known positive constants $a_1,a_2,\dots$ such that
\begin{equation}
\label{eq:gbounds}
\sup_{x\not\in\x_r}\frac{g(x)}{w(x)}\leq a_r\quad\forall r\in\zp,\qquad\lim_{r\to\infty}a_r=0,
\end{equation}
where $g(x)$ is $x$-row sum~\eqref{eq:gsums1} of $G$ and $\x_r$ is the $r$-sublevel set~\eqref{eq:ggmomb} of $w$.
\item For each $x\in\x$, at least one of the following three conditions is fulfilled:
either $w(x)>0$;
or $g(x)>0$;
or there exists $x_1,\dots,x_l\in\x$ such that $h(x,x_1)h(x_1,x_2)\dots h(x_{l-1},x_l)>0$ and $g(x_l)>0$.
\item 
The support of each of the columns of 
$H$ is finite:
\[
\supp{h(\cdot,x)}:= \{x'\in\x:h(x',x)\neq0\}\enskip\text{is finite,}\quad \forall  x \in \x. \]
\item $\L$ is non-empty.
\end{enumerate}
\end{assumption}
Assumptions~$(i,ii)$ ensure that $\rho H(x)$ and $\rho G(y)$ are absolutely convergent or $+\infty$ for any non-negative $\rho\in\ell^1$. 
Assumption~$(iii)$ allows us to control the feasible points outside of the sublevel sets of $w$ and is key in deriving our finite-dimensional approximating LPs. Assumption~$(iv)$ guarantees that the entries $\rho(x)$ 
are bounded---a requirement for the proofs of Sections~\ref{sec:A}--\ref{sec:B}. 
The particular statement of Assumption~$(iv)$ is motivated by the applications in Section~\ref{applications} for which it is natural, {(e.g.\ for the exit problem it asks that the chain may leave the domain from every state $x$ for which $w(x)=0$)}.
Assumption~$(v)$ is trivially satisfied in many applications~\cite{Sharkey2008,Ghate2010,Ghate2013} (for Markov chains, it requires that each state is reachable in a single jump from at most a finite number of states), and simplifies 
the derivation of some of our results. {It} can, however, be circumvented by increasing the size of the approximating LPs 
(see Appendix~\ref{shedvarfin}). {Lastly, Assumption~$(vi)$ ensures that the CILPs~\eqref{eq:boundconvthe1}--\eqref{eq:boundconvthe2} are non-trivial.}

For a broad class of functions $f$, the CILPs~\eqref{eq:boundconvthe1}--\eqref{eq:boundconvthe2} are guaranteed to be well-posed and \emph{solvable}, i.e.\ their optimal values are finite and their optimal points~exist.

\begin{theorem}[The CILPs are solvable]\label{solvable}
Suppose that Assumption \ref{assumption} is satisfied and assume that $f$ belongs to the set $\cal{W}$ of real-valued functions on $\x$ that eventually grow strictly slower than the norm-like function $w$:
\begin{equation}\label{eq:grs}\cal{W}:=\left\{f:\lim_{r\to\infty}\left(\sup_{x\not\in \x_r}\frac{\mmag{f(x)}}{w(x)}\right)=0\right\}.\end{equation}
Then, we have that:
\begin{enumerate}[(i)]
\item the sum $\rho(f)$ is absolutely convergent, $\forall \rho\in\cal{L}$, 
\item the optimal value $l_f$ of \eqref{eq:boundconvthe1} is finite, 
\item there exists at least one optimal point $\rho_*\in \cal{L}$ satisfying $\rho_*(f)=l_f$.
\end{enumerate}
Note that by replacing $f$ by $-f$, the above holds identically for optimal value $u_f$~\eqref{eq:boundconvthe2} and corresponding optimal points $\rho^*$.
\end{theorem}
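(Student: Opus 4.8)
The plan is to establish the three parts in order, with the technical heart being a uniform bound on $\L$ that then powers a compactness argument for part~(iii). For part~(i), since $f\in\cal{W}$ I fix $r\in\zp$ large enough that $\mmag{f(x)}\leq w(x)$ for all $x\notin\x_r$ (the supremum in \eqref{eq:grs} eventually drops below $1$). Splitting $\sum_{x\in\x}\mmag{f(x)}\rho(x)$ over $\x_r$ and its complement, the first piece is a finite sum (as $\x_r$ is finite by \eqref{eq:wfunc}) of finite terms and the second is at most $\sum_{x\notin\x_r}w(x)\rho(x)\leq\rho(w)\leq c$ by the moment bound \eqref{eq:ggmomb}; hence $\rho(f)$ converges absolutely for every $\rho\in\L$.

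Next I claim that $M(x):=\sup_{\rho\in\L}\rho(x)<\infty$ for each $x\in\x$ and that $\sup_{\rho\in\L}\normo{\rho}<\infty$. The per-coordinate bounds come from the three alternatives in part~(iv) of Assumption~\ref{assumption}: if $w(x)>0$ then $w(x)\rho(x)\leq\rho(w)\leq c$ gives $\rho(x)\leq c/w(x)$; if $g(x)>0$ then $g(x)\rho(x)\leq\rho(g)=1$ gives $\rho(x)\leq 1/g(x)$ (both using that each summand is non-negative); and in the remaining case there is a path $x=x_0,x_1,\dots,x_l$ (which may be taken simple, so $x_i\neq x_{i+1}$) with $h(x_i,x_{i+1})>0$ and $g(x_l)>0$, along which, starting from $\rho(x_l)\leq1/g(x_l)$, I bound $\rho(x_{l-1}),\dots,\rho(x_0)$ by backward induction --- isolating the term $\rho(x_i)h(x_i,x_{i+1})$ in the equation $\rho H(x_{i+1})=\phi(x_{i+1})$ of \eqref{eq:lineqs1}, discarding the remaining off-diagonal terms (non-negative by the Metzler property, part~(i)), and absorbing the diagonal term via the inductive bound on $\rho(x_{i+1})$; part~(v) ensures this equation is a genuine finite sum. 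The global bound follows by splitting $\normo{\rho}=\sum_{x\in\x_1}\rho(x)+\sum_{x\notin\x_1}\rho(x)$, the head being at most $\sum_{x\in\x_1}M(x)<\infty$ and the tail at most $\rho(w)\leq c$ since $w\geq1$ off $\x_1$. Part~(ii) is now immediate: $\L\neq\emptyset$ and part~(i) give $l_f<\infty$, while the splitting above yields $\rho(f)\geq-\sum_{x\in\x_r}\mmag{f(x)}M(x)-c$ uniformly in $\rho$, so $l_f>-\infty$.

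For part~(iii) I take a minimizing sequence $\rho_n\in\L$ with $\rho_n(f)\to l_f$. As each coordinate lies in the compact interval $[0,M(x)]$, a diagonal extraction (equivalently, weak-$*$ sequential compactness of balls in $\ell^1=(c_0)^*$, with $c_0$ separable) gives a subsequence converging coordinatewise to some $\rho_*\geq0$, with $\rho_*\in\ell^1$ by the global bound and Fatou. I then verify $\rho_*\in\L$: Fatou gives $\rho_*(w)\leq\liminf_n\rho_n(w)\leq c$, and each equation $\rho_*H(x)=\phi(x)$ passes to the limit because part~(v) makes it a finite sum. For the normalisation and for optimality I use that both $g$ and $f$ lie in $\cal{W}$ --- for $g$ this is exactly part~(iii), \eqref{eq:gbounds} --- together with the uniform-integrability fact that coordinatewise convergence with $\sup_n\rho_n(w)\leq c$ and $\rho_*(w)\leq c$ forces $\rho_n(h)\to\rho_*(h)$ for every $h\in\cal{W}$; this follows by bounding the tails $\sum_{x\notin\x_r}\mmag{h(x)}\rho_n(x)\leq c\sup_{x\notin\x_r}(\mmag{h(x)}/w(x))$ uniformly and letting the finite head converge. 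Applying it to $h=g$ gives $\rho_*(g)=\lim_n\rho_n(g)=1$, so $\rho_*\in\L$, and to $h=f$ gives $\rho_*(f)=\lim_n\rho_n(f)=l_f$, so $\rho_*$ is optimal.

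The two delicate points are the path-based alternative of part~(iv), where the backward induction must correctly discard the non-negative off-diagonal contributions to yield the per-coordinate bounds, and the uniform-integrability step licensing the passage of the infinite sums $\rho(g)$ and $\rho(f)$ to the weak-$*$ limit. Indeed, without the decay $a_r\to0$ of \eqref{eq:gbounds} and the growth condition defining $\cal{W}$, mass could escape to infinity in the limit and $\rho_*$ could violate either $\rho_*(g)=1$ or $\rho_*(f)=l_f$.
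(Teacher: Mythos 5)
Your proof is correct and follows essentially the same route as the paper's: part (i) via the tail estimate $\sum_{x\notin\x_r}\mmag{f(x)}\rho(x)\leq c\,\sup_{x\notin\x_r}\bigl(\mmag{f(x)}/w(x)\bigr)$, and part (iii) via weak$^*$ sequential compactness of $\L$ obtained from a diagonal (equivalently Banach--Alaoglu) extraction, Fatou's lemma, and the same uniform tail control for functions in $\cal{W}$ to pass $\rho_n(g)$ and $\rho_n(f)$ to the limit. The one place you go beyond the paper is in spelling out the per-coordinate bound $\sup_{\rho\in\L}\rho(x)<\infty$ from Assumption~\ref{assumption}$(iv)$ --- including the backward induction along the path in the third alternative, discarding non-negative off-diagonal terms --- which the paper asserts without proof; that argument is sound.
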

\begin{proof}See Appendix~\ref{appendix}.\end{proof}

As explained in the following remark, Assumption~\ref{assumption} ensures that the feasible sets of CILPs~\eqref{eq:boundconvthe1}--\eqref{eq:boundconvthe2} are bounded subsets of an appropriate normed space, a fact that underlies the results of this paper. 
\begin{remark}[The feasible set is contained in a bounded measure space]\label{rem:boundedspace} {If $\cal{N}$ denotes the null set $\{x\in\cal{X}:w(x)=0\}$ of the norm-like function $w$, then
$$\tilde{w}(x):=\left\{\begin{array}{cl}1&x\in\cal{N}\\w(x)&x\not\in\cal{N}\end{array}\right.\quad\forall x\in\cal{X}$$
defines the norm
$$\norm{{\rho}}_{\tilde{w}}:=\sum_{x\in\x}\mmag{\rho(x)}\tilde{w}(x)$$
on the weighted space $\{\rho\in\ell^1:\norm{\rho}_{\tilde{w}}<\infty\}$. For any $x$ in the null set $\cal{N}$, Assumption~\ref{assumption}$(i,ii,iv)$ and the constraints $\rho H=\phi $ and $\rho(g)=1$ in \eqref{eq:l2} imply that $\rho(x)\leq c_x$ uniformly over $\rho$ in $\cal{L}$ for some constant $c_x<\infty$. Because $w$ is norm-like, $\cal{N}$ is finite and it follows that $c_{\cal{N}}:=\sum_{x\in\cal{N}}c_x<\infty$. Consequently, the constraints $\rho(w)\leq c$ and $\rho \geq0$   in \eqref{eq:l2} imply that the feasible set $\cal{L}$ lies inside the bounded normed space $(\mathbb{B}_{c+c_\cal{N}},\norm{\cdot}_{\tilde{w}})$, where $\mathbb{B}_{c+c_\cal{N}}:=\{\rho\in\ell^1:\norm{\rho}_{\tilde{w}}\leq c+c_\cal{N}\}$. Therefore we say that \eqref{eq:boundconvthe1}--\eqref{eq:boundconvthe2} are CILPs over \textit{bounded measure spaces}.}
\end{remark}

\subsection*{Notation}
Throughout, { we denote the set of positive integers $\{1,2,3,\dots\}$ by $\zp$ and} we use the conventions
$\sup \emptyset  = - \infty$ and $\inf \emptyset = + \infty$.
A countable set $\x$ has cardinality $\mmag{\x}$
and power set $\twx:=\{A:A\subseteq\x\}$.
We abuse notation by using $\rho$ to denote both a measure on $(\x,\twx)$ and its density ($\rho(x):=\rho(\{x\})$~$\forall x\in\x$) so that
\begin{equation}
\label{eq:not1}
\rho(A)=\sum_{x\in A}\rho(x)\quad\forall A\subseteq \x.
\end{equation}
Given the above, we identify the space of finite, signed measures on $(\x,2^\x)$ with $\ell^1$. 

\subsection{Two motivating problems from Markov chain theory:
stationary distributions and exit problems}\label{applications} 
Our work is motivated by the analysis of Markov processes on countably infinite state spaces (a.k.a.\ Markov chains, or \emph{chains} for short).
Specifically, we are interested in computing the stationary distributions of a chain and the exit distributions and occupation measures associated with its exit times.
We now show briefly how these two problems can be mapped to Problem~\ref{prob:CILP} for both discrete-time and continuous-time chains.

\subsubsection{The discrete-time case}
Let $(X_n)_{n\in\n}$ denote a time-homogeneous discrete-time Markov chain taking values in a countable state space $\s$, with one-step matrix $P:=(p(x,y))_{x,y\in\s}$ and initial distribution  $\gamma:=(\gamma(x))_{x\in\s}$:
$$p(x,y)=\Pbl{\{X_1=y\}|\{X_0=x\}},\quad \gamma(x)=\Pbl{\{X_0=x\}},\quad\forall x,y\in\s,$$
where $\mathbb{P}_\gamma$ denotes the underlying probability measure (and the subscript emphasises that the initial state is sampled from $\gamma$).

\paragraph{Stationary distributions} A probability measure $\pi$ on $\s$ is a \emph{stationary distribution} of the chain if sampling its initial state from $\pi$ ensures that the chain remains distributed according to $\pi$ for all time:
$$\Pbp{\{X_n=x\}}=\pi(x)\quad\forall x\in\s,\enskip n\in\n.$$
As the following well-known corollary shows, the stationary distributions (satisfying the moment bound) are the feasible points of a CILP of the type in Problem~\ref{prob:CILP}. The optimal points are those maximising or minimising an average of interest over the set of stationary distributions and the optimal value is said maximum or minimum.
\begin{corollary}[{\cite[Theorem A.I.3.1]{Asmussen2003}}]\label{cor:dtstat}
The set of stationary distributions $\pi$ that satisfy the moment bound $\pi(w)\leq c$ is the set $\L$~\eqref{eq:l2} with
\begin{equation}
\label{eq:dtst}
\begin{array}{l} 
\x:=\s,\quad\y:=\s,\\
h(x',x):=p(x',x)-1_{x'}(x)\quad\forall x',x\in\s,\\
\phi(x):=0\quad\forall x\in\s,\\ 
g(x,y):=1_x(y)\quad\forall x,y\in\s,
\end{array}
\end{equation}
where $1_{x'}$ denotes the indicator function of state $x'$.
\end{corollary}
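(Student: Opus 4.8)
The plan is to reduce the statement to the classical characterisation of stationary distributions and then to verify, by direct substitution, that the data in~\eqref{eq:dtst} turn the abstract constraints defining $\L$~\eqref{eq:l2} into exactly the conditions for stationarity together with the moment bound. Since the corollary is flagged as well-known, the substantive ingredient is the cited result; everything else is a routine algebraic identification.

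First I would recall the standard fact (the cited Theorem~A.I.3.1 in~\cite{Asmussen2003}): a probability measure $\pi$ on $\s$ satisfies $\Pbp{\{X_n=x\}}=\pi(x)$ for all $x\in\s$ and all $n\in\n$ if and only if $\pi$ is a probability distribution obeying the global balance equations $\pi P=\pi$, i.e.
\[\sum_{x'\in\s}\pi(x')p(x',x)=\pi(x)\qquad\forall x\in\s.\]
This replaces the probabilistic definition of stationarity (phrased in terms of the law of the chain at every time $n$) by a purely algebraic condition on $\pi$, which is the form amenable to the linear-programming framework.

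Next I would substitute~\eqref{eq:dtst} into each constraint of $\L$. Using $h(x',x)=p(x',x)-1_{x'}(x)$ and $\phi\equiv 0$, the linear system $\rho H(x)=\phi(x)$ becomes
\[\sum_{x'\in\s}\rho(x')\bigl(p(x',x)-1_{x'}(x)\bigr)=\sum_{x'\in\s}\rho(x')p(x',x)-\rho(x)=0,\]
which is precisely the balance equation $\rho P=\rho$. Since $g(x,y)=1_x(y)$ yields $g(x)=\sum_{y\in\s}1_x(y)=1$, the mass constraint $\rho(g)=1$ reads $\sum_{x\in\s}\rho(x)=1$; combined with $\rho(x)\geq 0$, this says that $\rho$ is a probability distribution. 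The remaining constraint $\rho(w)\leq c$ carries over unchanged as the moment bound. Hence membership in $\L$ is equivalent to $\rho$ being a non-negative, unit-mass element of $\ell^1$ satisfying $\rho P=\rho$ and $\rho(w)\leq c$, which by the classical characterisation is exactly the set of stationary distributions $\pi$ with $\pi(w)\leq c$.

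The only point requiring slight care is ensuring that the rearrangement $\sum_{x'}\rho(x')(p(x',x)-1_{x'}(x))=\sum_{x'}\rho(x')p(x',x)-\rho(x)$ is legitimate: for non-negative $\rho\in\ell^1$ the series $\sum_{x'}\rho(x')p(x',x)$ is absolutely convergent (as noted after Assumption~\ref{assumption}, from parts~$(i)$--$(ii)$), so the terms may be separated without ambiguity. I do not anticipate any genuine obstacle here; the entire content of the proof is the invocation of the cited theorem followed by this verification.
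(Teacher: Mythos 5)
Your proposal is correct and matches the paper's treatment: the paper gives no proof of this corollary, deferring entirely to the cited classical characterisation of stationarity via the global balance equations $\pi P=\pi$, and your argument consists of exactly that invocation followed by the routine substitution of~\eqref{eq:dtst} into the constraints of $\L$ (with the summability point handled correctly). Nothing further is needed.
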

\paragraph{Exit distribution and occupation measure}
The \emph{exit time} $\sigma$ of the chain $(X_n)_{n\in\n}$ from a subset $\cal{D} \subseteq \s$ (the \emph{domain}) is the first time that the chain lies outside~$\cal{D}$:
$$\sigma:=\inf{\{n\in\n:X_n\not\in\cal{D}\}},$$
with $\sigma: = \infty$ if the chain never leaves $\mathcal{D}$.
Associated with the exit time, there is an \emph{exit distribution} $\mu$ and \emph{occupation measure} $\nu$:
\begin{align*}
\mu(x):= & \Pbl{\{X_\sigma=x,\sigma<\infty\}}\quad   \forall x\not\in\cal{D}, \\
\nu(x):= & \Ebl{\sum_{m=0}^{\sigma-1} 1_{x}(X_m)}\qquad\quad \forall x\in\cal{D}, 
\end{align*}
i.e.\ $\mu(x)$ denotes the probability that the chain exits~$\cal{D}$ via state $x$, and $\nu(x)$ denotes the expected number of visits that the chain makes to $x$ before exiting~$\cal{D}$. The occupation measure is the minimal point of a CILP of the type in Problem~\ref{prob:CILP} and the exit distribution is its image through an appropiately defined $G$:
\begin{corollary}[{{\cite[Theorem 11.7]{Kuntz2020}}}]\label{cor:exit1}
If $\sigma<\infty$ almost surely (i.e.\ chain eventually leaves $\cal{D}$ with probability one),  $\gamma(\cal{D})=1$ (i.e.\ the chain starts inside $\cal{D}$), and the occupation measure $\nu$ satisfies a moment bound $\nu(w)\leq c$, then $\nu$ is the minimal point~\eqref{eq:minimal} of the set $\L$ in~\eqref{eq:l2} with
\begin{equation}\label{eq:dtex}\begin{array}{l}\x:=\cal{D},\quad\y:=\cal{D}^c \\ 
h(x',x):=p(x',x)-1_{x'}(x)\quad\forall x',x\in\cal{D},\\
 \phi(x):=-\gamma(x)\quad\forall x\in\cal{D}, \\ g(x,y):=p(x,y)\quad \forall x\in\cal{D},\enskip \forall y\not\in\cal{D},\\
\end{array}\end{equation}
where $\cal{D}^c = \s \backslash \cal{D}$ denotes the complement of $\cal{D}$ in $\s$.  
Furthermore, the exit distribution is given by $\mu= \nu G$. 
\end{corollary}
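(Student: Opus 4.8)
The plan is to translate the linear constraints defining $\L$ into a single balance equation, verify that the occupation measure $\nu$ solves it, and then exploit non-negativity to show that $\nu$ is dominated by every other feasible point. Writing $P_{\cal D}:=(p(x',x))_{x',x\in\cal D}$ for the sub-stochastic restriction of the one-step matrix to the domain, the choice $h(x',x)=p(x',x)-1_{x'}(x)$ together with $\phi(x)=-\gamma(x)$ turns the equality constraint $\rho H(x)=\phi(x)$ into
\begin{equation}
\rho(x)=\gamma(x)+\sum_{x'\in\cal D}\rho(x')p(x',x)\qquad\forall x\in\cal D,
\end{equation}
i.e. $\rho=\gamma+\rho P_{\cal D}$ in vector form. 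Since $g(x,y)=p(x,y)$ for $y\notin\cal D$, the row-sum $g(x)=\sum_{y\notin\cal D}p(x,y)$ is the one-step exit probability from $x$, and $\rho G(y)=\sum_{x\in\cal D}\rho(x)p(x,y)$.

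First I would check that $\nu\in\L$. Decomposing the occupation measure as $\nu=\sum_{m=0}^\infty\nu_m$ with $\nu_m(x):=\Pbl{\{X_m=x,\,\sigma>m\}}$, the event $\{\sigma>m\}$ forces $X_0,\dots,X_m\in\cal D$; conditioning on $X_{m-1}$ and using $\gamma(\cal D)=1$ gives $\nu_0=\gamma$ and the recursion $\nu_m=\nu_{m-1}P_{\cal D}$, hence $\nu_m=\gamma P_{\cal D}^{\,m}$. Summing over $m$ and interchanging the non-negative sums by Tonelli recovers the balance equation above, so $\nu H=\phi$. Non-negativity is immediate, and the assumed moment bound $\nu(w)\le c$ both enters $\L$ directly and (since $w$ is norm-like) forces $\nu\in\ell^1$; it then only remains to verify $\nu(g)=1$, which I treat together with the image claim.

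Next I would establish $\mu=\nu G$. An analogous first-step decomposition, $\mu(y)=\sum_{m=1}^\infty\Pbl{\{X_m=y,\,\sigma=m\}}$ for $y\notin\cal D$, together with conditioning on $X_{m-1}\in\cal D$, yields $\mu(y)=\sum_{x\in\cal D}\nu(x)p(x,y)=\nu G(y)=\psi(y)$. Summing over $y\notin\cal D$ then gives $\nu(g)=\sum_{y\notin\cal D}\mu(y)=\Pbl{\{\sigma<\infty\}}=1$, using the hypothesis $\sigma<\infty$ almost surely; this closes the feasibility check $\nu\in\L$ and simultaneously proves the final assertion.

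The heart of the argument, and the step I expect to be the main obstacle, is minimality. For an arbitrary feasible $\rho\in\L$ I would iterate the balance equation: substituting $\rho=\gamma+\rho P_{\cal D}$ into itself $N$ times gives
\begin{equation}
\rho=\sum_{m=0}^{N-1}\gamma P_{\cal D}^{\,m}+\rho P_{\cal D}^{\,N}.
\end{equation}
Because $\rho\ge 0$ and $P_{\cal D}$ has non-negative entries, the remainder $\rho P_{\cal D}^{\,N}$ is non-negative, so $\rho\ge\sum_{m=0}^{N-1}\gamma P_{\cal D}^{\,m}$ entrywise for every $N$. The partial sums are non-decreasing in $N$ and bounded above by the finite measure $\rho$, so they converge to $\nu=\sum_{m=0}^\infty\gamma P_{\cal D}^{\,m}$, yielding $\rho(x)\ge\nu(x)$ for all $x\in\cal D$. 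The delicate points are justifying the termwise manipulations of the doubly-infinite non-negative sums and confirming that the Neumann series $\sum_m\gamma P_{\cal D}^{\,m}$ is exactly the probabilistic occupation measure $\nu$ rather than merely some solution of the balance equation; both follow from the non-negativity that pervades the construction.
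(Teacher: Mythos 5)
The paper itself does not prove this corollary; it is quoted from \cite[Theorem 2.9]{Kuntzthe}, so there is no in-paper argument to compare against. Judged on its own terms, your proof is correct and is the standard one for statements of this type: first-step analysis gives the decomposition $\nu=\sum_m \gamma P_{\cal{D}}^m$ and hence feasibility ($\nu=\gamma+\nu P_{\cal{D}}$, i.e.\ $\nu H=\phi$, with $\nu(g)=\Pbl{\{\sigma<\infty\}}=1$ and $\mu=\nu G$ coming from the same conditioning on $X_{m-1}$), while minimality follows from iterating the balance equation and discarding the non-negative remainder $\rho P_{\cal{D}}^{\,N}$ --- the classical ``minimal non-negative solution'' argument for hitting/occupation quantities. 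The termwise manipulations you flag are all justified by Tonelli since every quantity involved is non-negative. The only spot worth tightening is your parenthetical claim that the moment bound plus norm-likeness of $w$ forces $\nu\in\ell^1$: this handles all $x$ with $w(x)\geq\varepsilon>0$, but for the (finitely many) states with $w(x)=0$ you should note separately that $\nu(x)<\infty$, e.g.\ because $\sigma<\infty$ almost surely makes the number of returns to any reachable state geometric with return probability strictly less than one. This is a one-line repair, not a structural gap.
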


\subsubsection{The continuous-time case} 
Let $(X_t)_{t\geq 0}$ denote a minimal time-homogeneous continuous-time Markov chain {(e.g.\ see \cite[Sections~26,~37]{Kuntz2020})} taking values in a countable state space $\s$ with rate matrix $Q:=(q(x,y))_{x,y\in\s}$ and initial distribution $\gamma:=(\gamma(x))_{x\in\s}$:
$$q(x,y)=\lim_{t\to0}\frac{\Pbl{\{X_t=y,t<T_\infty\}|\{X_0=x\}}}{t},\quad \gamma(x)=\Pbl{\{X_0=x\}},\enskip\forall x,y\in\s,$$
where $T_\infty$ denotes the explosion time of the chain and $\mathbb{P}_\gamma$ is the underlying probability measure. We assume that $Q$ is stable and conservative, i.e.\
\begin{equation}
\label{eq:stabcon} 
-q(x,x)=\sum_{\substack{y \in \s \\ y\neq x}} q(x,y)<\infty\qquad\forall x\in\s.
\end{equation}

\paragraph{Stationary distribution}  A probability measure $\pi$ on $\s$ is a stationary distribution of the continuous-time chain if 
$$\Pbp{\{X_t=x,t<T_\infty\}}=\pi(x)\quad\forall x\in\s,\enskip t\geq0.$$
As for the discrete-time case, the   stationary distributions satisfying the moment bound are the feasible points of CILPs of the type in Problem~\ref{prob:CILP}:
\begin{corollary}[{{\cite[Theorem~1]{Miller1963}}}]
If $Q$ is regular (i.e.\ 
$\Pbl{\{T_\infty=\infty\}}=1$ for all probability 
distributions $\gamma$), the set of stationary distributions that satisfy the moment bound $\pi(w)\leq c$ is the set $\L$ in~\eqref{eq:l2} with 
\begin{equation}
\label{eq:ctst}
\begin{array}{l}
\x:=\s,\quad\y:=\s,\\ 
h(x',x):=q(x',x)\quad\forall x',x\in\s,\\
\phi(x):=0\quad\forall x\in\s,\\ 
g(x,y):=1_x(y)\quad\forall x,y\in\s.
\end{array}
\end{equation}
\end{corollary}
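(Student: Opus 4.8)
The plan is to first unwind the definition of $\L$~\eqref{eq:l2} under the substitutions~\eqref{eq:ctst}, and then to reduce the corollary to a classical characterisation of stationarity. Since $g(x,y)=1_x(y)$, the row sum is $g(x)=\sum_{y\in\s}1_x(y)=1$ for every $x$, so the image $\psi=\rho G$ coincides with $\rho$ itself and the constraint $\rho(g)=1$ becomes $\sum_{x\in\s}\rho(x)=1$; combined with $\rho(x)\geq0$ this says exactly that $\rho$ is a probability measure. With $h(x',x)=q(x',x)$ and $\phi\equiv0$, the linear system $\rho H(x)=\phi(x)$ reads $\rho Q(x)=\sum_{x'\in\s}\rho(x')q(x',x)=0$ for all $x$, i.e.\ the global balance equations. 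Hence $\L$ is precisely the set of probability measures $\pi$ satisfying $\pi Q=0$ and the moment bound $\pi(w)\leq c$, and it remains to prove that, for a regular $Q$, a probability measure is stationary if and only if it solves $\pi Q=0$.

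For that equivalence I would work with the transition semigroup $P_t=(p_t(x',x))_{x',x\in\s}$, which for a regular $Q$ is stochastic and satisfies Kolmogorov's backward equation $\tfrac{d}{dt}p_t(x',x)=\sum_{z\in\s}q(x',z)p_t(z,x)$ with $p_0(x',x)=1_{x'}(x)$. Using regularity to drop the $\{t<T_\infty\}$ qualifier, stationarity is the statement $(\pi P_t)(x)=\pi(x)$ for all $t\geq0$ and all $x$. Differentiating $t\mapsto(\pi P_t)(x)=\sum_{x'\in\s}\pi(x')p_t(x',x)$ at $t=0$ and passing the derivative through the sum gives $\pi Q(x)=\sum_{x'\in\s}\pi(x')q(x',x)=0$, which proves the forward implication. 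For the converse, assuming $\pi Q=0$ and substituting the backward equation into $\tfrac{d}{dt}(\pi P_t)(x)=\sum_{x'\in\s}\pi(x')\tfrac{d}{dt}p_t(x',x)$, an interchange of the two sums yields $\tfrac{d}{dt}(\pi P_t)(x)=\sum_{z\in\s}\pi Q(z)\,p_t(z,x)=0$, so $\pi P_t=\pi P_0=\pi$ and $\pi$ is stationary.

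The main obstacle is purely analytic: on a countably infinite state space both manipulations above---differentiating under the infinite sum over $x'$, and the Fubini-type swap of the sums over $x'$ and $z$---must be justified by dominated-convergence arguments. This is exactly where the two hypotheses do their work. Regularity guarantees that each $P_t$ is honestly stochastic and that the backward equation holds with equality (rather than the inequality that would arise from explosion), while the moment bound $\pi(w)\leq c$, together with the growth control on $w$ built into Assumption~\ref{assumption}, supplies the domination needed to pass limits through the tails of the sums. Because the moment bound enters identically on both sides of the equivalence, it may simply be carried along without affecting the argument. The delicate convergence estimates are precisely the content of~\cite[Theorem 2.41]{Kuntzthe}, which I would invoke rather than reproduce in full.
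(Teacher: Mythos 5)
The paper offers no proof of this corollary beyond the citation of [Theorem 2.41] in the referenced thesis, and your proposal --- unwinding the substitutions~\eqref{eq:ctst} to identify $\L$ with the set of probability measures solving the global balance equations $\pi Q=0$ subject to $\pi(w)\leq c$, and then deferring the equivalence ``$\pi$ stationary $\Leftrightarrow$ $\pi Q=0$'' for regular $Q$ to that same reference --- is correct and essentially the same route. One caveat on your intermediate sketch: the moment bound $\pi(w)\leq c$ does \emph{not} supply the domination needed for your differentiation-under-the-sum and Fubini steps, since nothing in Assumption~\ref{assumption} forces $w$ to dominate $x\mapsto -q(x,x)$ (here $g\equiv 1$, so Assumption~\ref{assumption}$(iii)$ only says $w$ is norm-like); the equivalence for regular $Q$ holds with no moment bound at all and its proof is genuinely more delicate than a dominated-convergence swap (the converse direction typically goes through the resolvent or the minimal semigroup), so it is right that you ultimately lean on the citation rather than on that heuristic.
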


\paragraph{Exit distribution and occupation measure}
The exit time $\tau$ of $(X_t)_{t\geq0}$ from a domain $\cal{D}$ is the first time that the chain lies outside of $\cal{D}$:
$$\tau=\inf\{0\leq t<T_\infty: X_t\not\in \cal{D}\},$$
with $\tau: = \infty$ if the chain never leaves $\mathcal{D}$. 
Associated with $\tau$, there is an \emph{exit distribution} $\mu$ and \emph{occupation measure} $\nu$:
\begin{align*}
\mu(x):= & \Pbl{\{X_\tau=x,\tau<\infty\}} \quad \forall x\not\in\cal{D}, \\ \nu(x):= & \Ebl{\int_0^{\min\{\tau,T_\infty\}} 1_{x}(X_t)dt} \quad \forall x\in\cal{D},
\end{align*}
which characterise where the chain exits the domain ($\mu$), and where inside the domain the chain spends its time until it exits ($\nu$). Similarly as in the discrete-time case, the occupation measure is the minimal point of a CILP of the type in Problem~\ref{prob:CILP} and the exit distribution is its image through an appropriately defined $G$:
\begin{corollary}[{{\cite[Theorem 36.11]{Kuntz2020}}}] \label{cor:exit2}
If $\tau < \infty$ almost surely, 
$\gamma(\cal{D})=1$, 
and $\nu(w)\leq c$,  the occupation measure is the minimal point~\eqref{eq:minimal} of the set $\L$ in~\eqref{eq:l2} with
\begin{equation}\label{eq:ctex}
\begin{array}{l}\x:=\cal{D},\quad\y:=\cal{D}^c,\\ h(x',x):=q(x',x)\quad\forall x',x\in\cal{D},
\\
 \phi(x):=-\gamma(x)\quad\forall x\in\cal{D},\\ g(x,y):=q(x,y)\quad \forall x\in\cal{D},\enskip \forall y\not\in\cal{D},\\
 \end{array}\end{equation}
Furthermore, the exit distribution is given by $\mu= \nu G$. 
\end{corollary}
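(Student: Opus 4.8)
The plan is to prove the two assertions separately: first that the occupation measure $\nu$ is \emph{feasible} (i.e.\ $\nu\in\L$) and that its image satisfies $\mu=\nu G$, and second that $\nu$ is the \emph{minimal} point of $\L$. Throughout I would abbreviate $q(x):=-q(x,x)=\sum_{y\neq x}q(x,y)\geq 0$ for the total exit rate of state $x$ and write $\bar p(x',x):=q(x',x)/q(x')$ (for $q(x')>0$) for the embedded jump chain, reserving $\bar P_{\cal{D}}:=(\bar p(x',x))_{x',x\in\cal{D}}$ for its substochastic restriction to $\cal{D}$. The one structural fact I would import from Markov chain theory is the representation $\nu(x)=V(x)/q(x)$, where $V(x)$ is the expected number of visits the jump chain makes to $x\in\cal{D}$ before leaving $\cal{D}$; this holds because, conditional on the jump chain, the holding time at each visit to $x$ is independent with mean $1/q(x)$.

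For feasibility I would apply Dynkin's formula to the chain stopped at $\tau\wedge T_\infty$ with the \emph{bounded} test function $f=1_y$, giving $\Ebl{1_y(X_{\tau\wedge t})}=\gamma(y)+\Ebl{\int_0^{\tau\wedge t}q(X_s,y)\,ds}$, and then let $t\to\infty$. Because $\tau<\infty$ almost surely and $\gamma(\cal{D})=1$, the left-hand side converges (by bounded convergence) to $0$ when $y\in\cal{D}$ and to $\mu(y)$ when $y\in\cal{D}^c$, while Assumption~(v) makes $\{x:q(x,y)\neq 0\}$ finite, so the right-hand side is the \emph{finite} sum $\gamma(y)+\sum_{x\in\cal{D}}q(x,y)\nu(x)$ with no interchange-of-limits issues. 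Reading the resulting identity for $y\in\cal{D}$ yields $\nu H(y)=-\gamma(y)=\phi(y)$, i.e.\ the equality constraints; reading it for $y\in\cal{D}^c$ yields $\mu(y)=(\nu G)(y)$, i.e.\ $\mu=\nu G$. Summing the latter over $y\in\cal{D}^c$ gives $\nu(g)=\sum_{y\in\cal{D}^c}\mu(y)=\Pbl{\tau<\infty}=1$. Nonnegativity of $\nu$ is immediate, the moment bound is assumed, and $\nu\in\ell^1$ follows from $\nu(w)\leq c$ together with the finiteness of the sublevel sets of the norm-like $w$; hence $\nu\in\L$.

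For minimality I would take an arbitrary $\rho\in\L$ and rewrite its balance equation $\sum_{x'\in\cal{D}}\rho(x')q(x',x)=-\gamma(x)$ by isolating the diagonal term, which turns it, after setting $m:=q\,\rho$, into the renewal equation $m=\gamma+m\bar P_{\cal{D}}$. The measure $V=q\,\nu$ solves the same equation and equals the minimal nonnegative solution $\sum_{n\geq 0}\gamma\bar P_{\cal{D}}^{\,n}$. Iterating $m=\gamma+m\bar P_{\cal{D}}\geq\gamma$ and using $\bar P_{\cal{D}}\geq 0$ gives $m\geq\sum_{n=0}^{N}\gamma\bar P_{\cal{D}}^{\,n}$ for every $N$, hence $m\geq V$; dividing by $q(x)$ recovers $\rho(x)\geq\nu(x)$ at every state with $q(x)>0$. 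States with $q(x)=0$ are absorbing, so $\tau<\infty$ almost surely forces them to be visited with probability zero, whence $\nu(x)=0\leq\rho(x)$ there as well. This establishes $\nu(x)\leq\rho(x)$ for all $x\in\cal{D}$ and all $\rho\in\L$, i.e.\ $\nu=\rho_m$; note that this step uses only the nonnegativity and the equality constraints of $\L$, not the normalisation $\rho(g)=1$.

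I expect the main obstacle to be the minimality step, specifically justifying in the countably infinite setting that $q\,\nu$ is \emph{exactly} the minimal nonnegative solution $\sum_{n\geq 0}\gamma\bar P_{\cal{D}}^{\,n}$ of the renewal equation. This requires the embedded-jump-chain representation of $\nu$ to be valid up to $\tau$ (so that no explosion occurs before exit, which is where the hypothesis $\tau<\infty$ almost surely enters) and some care in interchanging the expectation with the infinite sum over visit counts. By comparison, the feasibility argument is routine once Assumption~(v) collapses the relevant sums to finite ones, and the division by $q(x)$ is harmless once the absorbing states have been handled separately.
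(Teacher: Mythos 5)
The paper does not actually prove Corollary~\ref{cor:exit2}: it is imported verbatim from \cite[Theorem 2.37]{Kuntzthe}, so there is no in-paper argument to compare yours against. Taken on its own, your two-part strategy --- Dynkin's formula with $f=1_y$ for feasibility and $\mu=\nu G$, then the minimal-nonnegative-solution characterisation of $q\nu$ for minimality --- is the standard route and is essentially sound. The minimality half is clean: isolating the diagonal turns $\rho H=-\gamma$ into $m=\gamma+m\bar P_{\cal{D}}$ with $m=q\rho\geq 0$, the iteration gives $m\geq\sum_{n=0}^{N}\gamma\bar P_{\cal{D}}^{\,n}$ for every $N$, and the jump-chain/holding-time decomposition identifies $q\nu$ with $\sum_{n\geq0}\gamma\bar P_{\cal{D}}^{\,n}$; your separate treatment of absorbing states via $\tau<\infty$ a.s.\ is exactly what is needed. (Minor slip: as written your $\bar p(x',x)=q(x',x)/q(x')$ has diagonal $-1$; you of course want the jump chain with zero diagonal, which is what the displayed renewal equation presupposes.) Two points deserve more care than you give them. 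First, Dynkin's formula for the minimal chain with the unbounded map $x\mapsto q(x,y)$ must be justified by localising at the jump times $T_n$ and passing to the limit separately in the nonnegative off-diagonal part (monotone convergence) and the single diagonal term $q(y)\nu(y)<\infty$; this also shows $\sum_{x\in\cal{D}}q(x,y)\nu(x)<\infty$ without invoking Assumption~\ref{assumption}$(v)$, which is preferable since the corollary as stated does not presuppose Assumption~\ref{assumption}. Second, membership $\nu\in\ell^1$ does not follow from $\nu(w)\leq c$ alone on states where $w$ vanishes; you need the finitely many states of $\x_1$ to carry finite $\nu$-mass, which is where an argument in the spirit of Assumption~\ref{assumption}$(iv)$ (or a direct estimate on the expected holding time) has to enter. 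With those repairs the proof goes through.
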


\section{Bounding the optimal values and approximating the optimal points of the CILP}\label{sec:A} 

Consider Problem~\ref{prob:CILP} under Assumption~\ref{assumption}. 
To derive finite-dimensional approximations of CILPs \eqref{eq:boundconvthe1}--\eqref{eq:boundconvthe2}, we start by truncating the set $\x$ using the sublevel sets $\x_r$ in~\eqref{eq:wfunc} of the norm-like function $w$. We then define the restriction $\rho_{|r}$ to $\x_r$ of each 
feasible point~$\rho$~in~$\L$,
\begin{equation}
\label{eq:restr}
\rho_{|r}(x):=\left\{\begin{array}{ll}\rho(x)&\text{if }x\in \x_r\\ 0 &\text{if }x\not\in \x_r\end{array}\right.\quad\forall x\in \cal{X},
\end{equation} 
and the following set of measures $\rho^r$: 
\begin{equation}
\label{eq:lpap}
\L_r:=\left\{\rho^r\in\ell^1: 
\begin{array}{l} 
    \rho^r H(x)=\phi(x) \quad\forall x\in\e_r,\\
    1-ca_r\leq\rho^r(g)\leq 1, \\
    \rho^r(w)\leq c,\\ 
    \rho^r(x)\geq0 \quad\forall x\in\x,
    \\ \rho^r(\x_r^c)=0. 
\end{array}
\right\} \quad \forall r \in \zp.
\end{equation}
Here $\x_r^c = \x \backslash \x_r$ denotes the complement of $\x_r$ in $\x$; the positive constant $a_r$ is as in~\eqref{eq:gbounds}; $c$ denotes the moment bound constant in~\eqref{eq:ggmomb};  
and
\begin{equation}\label{eq:er}\e_r:=\{x\in\x_r:h(x',x)=0 \enskip\forall x'\not\in\x_r\}.\end{equation}
The set $\L_r$ is an \emph{outer approximation} of $\L$ in the sense that the restriction of every feasible point in $\L$ belongs to $\L_r$. 
\begin{lemma}[The outer approximation property of $\L_r$]\label{thebounds} Suppose that Assumption~\ref{assumption} is satisfied. For any $r$~in~$\zp$, 
$\L_r$ is an outer approximation of $\L$:
$$\rho_{|r} \in \L_r \quad \forall \rho \in \L.$$
\end{lemma}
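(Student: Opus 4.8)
The plan is to verify, one by one, the five conditions defining $\L_r$ in \eqref{eq:lpap} for the candidate point $\rho^r=\rho_{|r}$, given an arbitrary $\rho\in\L$. Three of these are essentially immediate. Since $w$ is norm-like, each sublevel set $\x_r$ is finite, so $\rho_{|r}$ has finite support and hence lies in $\ell^1$; the constraint $\rho_{|r}(\x_r^c)=0$ holds by the very definition \eqref{eq:restr} of the restriction. Nonnegativity $\rho_{|r}(x)\geq 0$ is inherited from $\rho\geq 0$, because $\rho_{|r}(x)$ equals either $\rho(x)$ or $0$. The moment bound $\rho_{|r}(w)\leq c$ follows by monotonicity: since $w\geq 0$ and $\rho\geq 0$, deleting the terms with $x\notin\x_r$ only decreases the sum, so $\rho_{|r}(w)=\sum_{x\in\x_r}w(x)\rho(x)\leq\rho(w)\leq c$.

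The two remaining conditions are where the definitions of $\e_r$ and $a_r$ do the work. For the equality constraints I would fix $x\in\e_r$ and observe that, by the definition \eqref{eq:er} of $\e_r$, we have $h(x',x)=0$ for every $x'\notin\x_r$. Consequently the incoming sum $\rho H(x)=\sum_{x'\in\x}\rho(x')h(x',x)$ is supported on $\x_r$, so truncating $\rho$ to $\x_r$ leaves it unchanged: $\rho_{|r}H(x)=\sum_{x'\in\x_r}\rho(x')h(x',x)=\rho H(x)=\phi(x)$, which is exactly the desired equality on $\e_r$. This is precisely why the equalities are imposed only on $\e_r$ and not on all of $\x_r$: for $x\in\x_r\setminus\e_r$ the truncation would sever genuine contributions from outside $\x_r$. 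For the two-sided bound on $\rho_{|r}(g)$, the upper bound $\rho_{|r}(g)\leq\rho(g)=1$ again follows by monotonicity, since $g\geq 0$ (as $G$ is nonnegative) and $\rho\geq 0$. For the lower bound I would write the deficit
$$\rho(g)-\rho_{|r}(g)=\sum_{x\notin\x_r}g(x)\rho(x),$$
and control it using Assumption~\ref{assumption}(iii): for $x\notin\x_r$ we have $w(x)\geq r>0$ and hence $g(x)\leq a_r\,w(x)$, whence
$$\sum_{x\notin\x_r}g(x)\rho(x)\leq a_r\sum_{x\notin\x_r}w(x)\rho(x)\leq a_r\,\rho(w)\leq c\,a_r.$$
Combining with $\rho(g)=1$ yields $\rho_{|r}(g)\geq 1-ca_r$, which together with the upper bound gives condition two.

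I expect the only genuinely delicate point to be the equality constraints, specifically getting the bookkeeping around $\e_r$ right: one must be careful that $\e_r$ is defined exactly so that restricting $\rho$ to $\x_r$ does not alter $\rho H(x)$ for $x\in\e_r$, and to note that membership of $x$ in $\e_r$ constrains the column $h(\cdot,x)$ (incoming transitions), not the row. Everything else reduces to the elementary observation that discarding nonnegative terms can only decrease a nonnegative sum, together with the single quantitative input $g\leq a_r w$ off $\x_r$ supplied by \eqref{eq:gbounds}.
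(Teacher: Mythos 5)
Your proof is correct and follows essentially the same route as the paper's: the equality constraints on $\e_r$ are handled by noting the column support of $H$ lies in $\x_r$, the sign and moment constraints by monotonicity of nonnegative sums, and the lower bound $1-ca_r\leq\rho_{|r}(g)$ by the tail estimate $\sum_{x\notin\x_r}g(x)\rho(x)\leq a_r\rho(w)\leq ca_r$, which is exactly the paper's generalised Markov inequality specialised to $f=g$.
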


\begin{proof} 
Let $\rho $ be any feasible point in $\cal{L}$ and $\rho_{|r}$ be its restriction~\eqref{eq:restr} to $\x_r$.
If $x$ belongs to $\e_r$, $\rho H(x)=\phi(x)$ only involves entries $\rho(x')$ 
indexed by $x'$ in $\x_r$, and so
\begin{equation}\label{eq:eqnstrunc}
\rho_{|r}H(x)=\rho H(x)=\phi(x) \quad\forall x\in\e_r.
\end{equation} 
Because $w$ is a non-negative function, it follows from the definition~\eqref{eq:l2} that
\begin{equation}\label{eq:rhormomb}\rho_{|r}(w)\leq\rho(w)\leq c,\quad\rho_{|r}(x)\geq 0\quad\forall x\in\x.\end{equation}
Next, consider the following generalisation of Markov's inequality: for any nonnegative function $f$ on $\x$,
\begin{align}
\label{eq:markovthe1}
\sum_{x\not\in\x_r}f(x)\rho(x)&\leq \left(\sup_{x\not\in\x_r}\frac{f(x)}{w(x)}\right)\sum_{x\not\in\x_r}w(x)\rho(x)
\leq\left(\sup_{x\not\in\x_r}\frac{f(x)}{w(x)}\right)\rho(w)\\&\leq c\left(\sup_{x\not\in\x_r}\frac{f(x)}{w(x)}\right)\nonumber.
\end{align}
Setting $f$ to be the row-sum function $g$  in~\eqref{eq:gsums1}, we find that
\begin{equation}
\label{eq:gconstrel}
1-ca_r\leq  1-c\left(\sup_{x\not\in\x_r}\frac{g(x)}{w(x)}\right)\leq 1-\sum_{x\not\in\x_r}g(x)\rho(x)=\rho_{|r} (g)\leq\rho (g)=1,
\end{equation}
where the first inequality follows from Assumption~\ref{assumption}$(iii)$, the second from~\eqref{eq:markovthe1}, and the remainder from~$\rho(g)=1$ in~\eqref{eq:l2}. Combining \eqref{eq:eqnstrunc}--\eqref{eq:gconstrel}, we have that~$\rho_{|r}$ belongs to $\L_r$.
\end{proof}

The outer approximation property of $\L_r$ has an important consequence:
the optimal values of the CILPs~\eqref{eq:boundconvthe1}--\eqref{eq:boundconvthe2} can be bounded by optimising over $\L_r$, i.e.\ by solving the finite-dimensional LPs
\begin{align}
l^r_f&:=\inf\{\rho^r(f):\rho^r\in\L_r\},\label{eq:favet1}\\
u^r_f&:=\sup\{\rho^r(f):\rho^r\in\L_r\}.  \label{eq:favet2}
\end{align}
Note that these LPs are solvable because they entail optimising a linear function over a compact subset of $\mathbb{R}^{\mmag{\x_r}}$. Lemma~\ref{thebounds} then yields the following bounds:
\begin{corollary}[Bounding the optimal values]\label{thebounds2} Suppose that Assumption~\ref{assumption} is satisfied and let $f$ be any function in $\cal{W}$. 
If $l_f$ and $u_f$ are the optimal values of the CILPs \eqref{eq:boundconvthe1}--\eqref{eq:boundconvthe2} and $l^r_f$ and $u^r_f$ are those of \eqref{eq:favet1}--\eqref{eq:favet2}, then we have that
\begin{equation}
\label{eq:kkk}
l^r_f-c\left(\sup_{x\not\in\x_r}\frac{\mmag{f(x)}}{w(x)}\right)
\leq l_f\leq \rho(f)\leq u_f \leq u^r_f+c\left(\sup_{x\not\in\x_r}\frac{\mmag{f(x)}}{w(x)}\right), 
\enskip \forall \rho\in\cal{L}, \enskip  r \in \zp.
\end{equation}
 Under the following additional assumptions, the bounds~\eqref{eq:kkk} can be sharpened:
\begin{align}
\label{eq:kkk1}
& \text{If $f(x)\geq 0, \, \forall x\not\in\x_r$, then  } l^r_f\leq l_f\leq \rho(f)\quad\forall \rho\in\L,\enskip  r \in \zp.  \\
\label{eq:kkk2}
&\text{If $f(x)\leq 0, \, \forall x\not\in\x_r$, then  } \rho(f)\leq u_f\leq u^r_f \quad \forall \rho\in\L,\enskip  r\in\zp.
\end{align}
\end{corollary}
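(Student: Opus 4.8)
The plan is to leverage Lemma~\ref{thebounds} together with the Markov-type inequality~\eqref{eq:markovthe1}. The two innermost inequalities $l_f\leq\rho(f)\leq u_f$ in~\eqref{eq:kkk} are immediate from the definitions~\eqref{eq:boundconvthe1}--\eqref{eq:boundconvthe2} of $l_f$ and $u_f$ as the infimum and supremum of $\rho(f)$ over $\cal{L}$; note that $\rho(f)$ is a well-defined (absolutely convergent) real number for every $\rho\in\cal{L}$ by Theorem~\ref{solvable}$(i)$, since $f\in\cal{W}$. The real content lies in the two outer inequalities, and the idea is to split each $\rho(f)$ into its ``interior'' contribution on $\x_r$ and its ``tail'' contribution on $\x_r^c$, controlling the interior via $\L_r$ and the tail via~\eqref{eq:markovthe1}.

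First I would fix $r\in\zp$ and $\rho\in\cal{L}$ and write, using the restriction~\eqref{eq:restr},
\[
\rho(f)=\rho_{|r}(f)+\sum_{x\not\in\x_r}f(x)\rho(x),
\]
where $\rho_{|r}(f)$ is a finite sum because $\x_r$ is a finite sublevel set of $w$. To bound the tail I would apply the inequality~\eqref{eq:markovthe1} to the \emph{nonnegative} function $\mmag{f}$ (legitimate since $\rho\geq0$), obtaining
\[
\left|\sum_{x\not\in\x_r}f(x)\rho(x)\right|\leq\sum_{x\not\in\x_r}\mmag{f(x)}\rho(x)\leq c\left(\sup_{x\not\in\x_r}\frac{\mmag{f(x)}}{w(x)}\right)=:\delta_r.
\]
By Lemma~\ref{thebounds} we have $\rho_{|r}\in\L_r$, so $l^r_f\leq\rho_{|r}(f)\leq u^r_f$ by the definitions~\eqref{eq:favet1}--\eqref{eq:favet2}. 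Combining the first display with these two facts yields $l^r_f-\delta_r\leq\rho(f)\leq u^r_f+\delta_r$; taking the infimum and the supremum over $\rho\in\cal{L}$ then gives $l^r_f-\delta_r\leq l_f$ and $u_f\leq u^r_f+\delta_r$, which are precisely the outer inequalities of~\eqref{eq:kkk}.

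For the sharpened bounds~\eqref{eq:kkk1} and~\eqref{eq:kkk2}, the key observation is that the sign of the tail term $\sum_{x\not\in\x_r}f(x)\rho(x)$ is controlled by the sign of $f$ on $\x_r^c$, since $\rho(x)\geq0$. When $f(x)\geq0$ for all $x\not\in\x_r$, the tail is nonnegative, so $\rho(f)\geq\rho_{|r}(f)\geq l^r_f$ for every $\rho\in\cal{L}$; taking the infimum gives $l^r_f\leq l_f$. The case $f(x)\leq0$ on $\x_r^c$ is symmetric: the tail is nonpositive, $\rho(f)\leq\rho_{|r}(f)\leq u^r_f$, and taking the supremum gives $u_f\leq u^r_f$.

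Once Lemma~\ref{thebounds} is in hand the argument is essentially bookkeeping, so I do not anticipate a genuine obstacle; the only points requiring care are the convergence issues, which are dispatched by $f\in\cal{W}$ via Theorem~\ref{solvable}$(i)$, and the degenerate regime in which $\sup_{x\not\in\x_r}\mmag{f(x)}/w(x)=+\infty$ (possible at small $r$), where $\delta_r=+\infty$ and the outer inequalities in~\eqref{eq:kkk} hold trivially.
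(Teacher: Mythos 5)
Your proposal is correct and follows essentially the same route as the paper: decompose $\rho(f)$ into $\rho_{|r}(f)$ plus the tail over $\x_r^c$, invoke Lemma~\ref{thebounds} to get $l^r_f\leq\rho_{|r}(f)\leq u^r_f$, control the tail via the Markov-type inequality~\eqref{eq:markovthe1} applied to $\mmag{f}$ for~\eqref{eq:kkk}, and use non-negativity of $\rho$ together with the sign of $f$ on $\x_r^c$ for~\eqref{eq:kkk1}--\eqref{eq:kkk2}. Your added remarks on absolute convergence via Theorem~\ref{solvable}$(i)$ and the degenerate case $\sup_{x\not\in\x_r}\mmag{f(x)}/w(x)=+\infty$ are sensible but do not change the argument.
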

\begin{proof} 
From Lemma~\ref{thebounds} and the definitions 
\eqref{eq:favet1}--\eqref{eq:favet2}, we have
\begin{align}
\label{eq:fn78weanfa8w33}
l^r_f  \leq\rho_{|r}(f) \leq u^r_f  \Rightarrow 
l^r_f+\sum_{x\not\in\x_r}f(x)\rho(x)  \leq \rho(f)\leq u^r_f+\sum_{x\not\in\x_r}f(x)\rho(x),
\end{align}
for every feasible point $\rho$ in $\cal{L}$.
The inequalities~\eqref{eq:kkk1}--\eqref{eq:kkk2}  follow immediately because all $\rho$~in~$\L$ are non-negative. For \eqref{eq:kkk},  replace $f$ with $\mmag{f}$ in~\eqref{eq:markovthe1} to obtain
\begin{align}
\label{eq:markovthe2}
\mmag{\sum_{x\not\in\x_r}f(x)\rho(x)}\leq \sum_{x\not\in\x_r}\mmag{f(x)}\rho(x)\leq
c\left(\sup_{x\not\in\x_r}\frac{\mmag{f(x)}}{w(x)}\right).
\end{align}
\end{proof}
Our definition~\eqref{eq:wfunc} implies that the truncations $\x_r$  form an increasing sequence that approaches the entire index set $\x$ as $r$ tends to $\infty$:
$$\x_1\subseteq\x_2\subseteq \ldots \subseteq \x_r \subseteq \x_{r+1} \subseteq  \ldots \quad \text{and} \quad \bigcup_{r=1}^\infty\x_r=\x.$$
In the following, we show that the associated outer approximations $\L_r$ converge to $\L$,
and that the sequences of lower bounds $(l^r_f)_{r\in\zp}$ and upper bounds $(u^r_f)_{r\in\zp}$ obtained by solving \eqref{eq:favet1}--\eqref{eq:favet2} for increasing truncations converge to the optimal values $l_f$ and $u_f$, respectively. The notions of convergence we consider here are as follows:
\begin{definition}[Convergence in weak$^*$]\label{def:weak}{ With $\cal{W}$ as in~\eqref{eq:grs} for any given non-negative $w$, we say that a}
 sequence {$(\rho^r)_{r\in\zp}\subseteq\ell^1$} converges to a point {$\rho\in\ell^1$} in weak$^*$ as $r\to\infty$ if and only if
\begin{equation}
\label{eq:weakstardef}
\lim_{r\to\infty}\rho^r(f)=\rho(f)\quad\forall f\in\cal{W}.
\end{equation}
%
%where $\cal{W}$ is defined in~\eqref{eq:grs}.
%
\end{definition}

\begin{remark}[Convergence in weak$^\ast$ implies convergence in total variation]  If $w$ is norm-like, weak$^*$ convergence of a sequence  {$(\rho^r)_{r\in\zp}\subseteq\ell^1$} to {$\rho\in\ell^1$} implies convergence in total variation:
$$\lim_{r\to\infty}\norm{\rho-\rho^r}=0,$$
where  $\norm{\cdot}$ denotes the total variation norm:
\begin{equation}\label{eq:tvdef}
\norm{\rho-\rho^r}:=\sup_{A\subseteq\x}\mmag{\rho(A)-\rho^r(A)}.
\end{equation}
See~Appendix~\ref{wtv} for details.
\end{remark}

The following theorem formalises the manner in which the outer approximations $\L_r$ converge to the set $\L$ as $r$ tends to infinity.
\begin{theorem}[The outer approximations $\L_r$ converge to $\L$]\label{convlpt} 
Suppose that Assumption~\ref{assumption} is satisfied. The weak$^\ast$ accumulation points of any given sequence $(\rho^r \in \L_r)_{r\in\zp}$ belong to $\cal{L}$, and every subsequence of $(\rho^r)_{r\in\zp}$ has a weak$^\ast$ convergent subsequence. 
\end{theorem}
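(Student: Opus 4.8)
The plan is to prove the two assertions---that weak$^\ast$ accumulation points lie in $\cal{L}$, and that every subsequence admits a weak$^\ast$-convergent sub-subsequence---by a diagonal-extraction argument coupled with a limit analysis of the defining constraints of $\L_r$. The key structural fact I would exploit is Assumption~\ref{assumption}$(v)$: since each column $h(\cdot,x)$ has finite support, for every fixed $x\in\x$ we have $x\in\e_r$ for all sufficiently large $r$ (once $\x_r$ swallows the finite support of $h(\cdot,x)$). This means each individual linear constraint $\rho^r H(x)=\phi(x)$ is eventually active along the sequence, which is exactly what is needed to pass it to the limit.

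First I would establish precompactness. Every $\rho^r\in\L_r$ satisfies $\rho^r(w)\le c$ and $\rho^r(g)\le 1$, so the sequence is bounded in $\norm{\cdot}_{w+g}$; moreover Assumption~\ref{assumption}$(iv)$ forces each coordinate $\rho^r(x)$ to be bounded uniformly in $r$ (this is the role the paper flags for $(iv)$). Hence the coordinate sequences $(\rho^r(x))_{r}$ are bounded in $\r$ for each $x\in\x$, and since $\x$ is countable, a standard diagonal extraction yields, from any given subsequence, a further subsequence $(\rho^{r_k})_k$ converging coordinatewise to some limit $\rho$. I would then argue that this coordinatewise limit is in fact a weak$^\ast$ limit in the sense of Definition~\ref{def:weak}: for $f\in\cal{W}$, split $\rho^{r_k}(f)$ into the finite sum over $\x_{r}$ for a large fixed truncation level and a tail, control the tail uniformly in $k$ using the generalised Markov inequality~\eqref{eq:markovthe1} together with $f\in\cal{W}$, and pass the finite part through the coordinatewise limit. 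This gives the second assertion and simultaneously identifies the accumulation point $\rho$.

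The heart of the proof---and the main obstacle---is showing $\rho\in\L$, i.e.\ that the limit satisfies all four families of constraints in~\eqref{eq:l2}. The equality constraints $\rho H(x)=\phi(x)$ follow from the finite-support observation above: fix $x$, choose $k$ large enough that $x\in\e_{r_k}$ and that $\x_{r_k}$ contains $\supp{h(\cdot,x)}$; then $\rho^{r_k}H(x)=\phi(x)$ is a fixed finite linear combination of coordinates, and coordinatewise convergence transfers the identity to $\rho$. Non-negativity $\rho(x)\ge0$ passes trivially to the coordinatewise limit. For the moment bound $\rho(w)\le c$, I would use Fatou-type lower semicontinuity: for any finite truncation, $\sum_{x\in\x_{r}}w(x)\rho(x)=\lim_k\sum_{x\in\x_{r}}w(x)\rho^{r_k}(x)\le c$, and then let the truncation grow. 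The delicate point is the mass constraint $\rho(g)=1$: the lower bound $\rho^r(g)\ge 1-ca_r$ with $a_r\to0$ (Assumption~\ref{assumption}$(iii)$) should push the limiting mass up to $1$, while $\rho^r(g)\le1$ caps it; but because $g$ need not lie in $\cal{W}$, weak$^\ast$ convergence does not immediately give $\rho^{r_k}(g)\to\rho(g)$, and one must argue separately that no mass escapes to infinity. I expect to reconcile the lower and upper bounds on $\rho^r(g)$ by combining lower semicontinuity of $\rho\mapsto\rho(g)$ (giving $\rho(g)\le1$, or the reverse, after a careful truncation) with the tightening $a_r\to0$ to squeeze $\rho(g)=1$; making this no-escape-of-mass step rigorous, using $(iii)$ to bound the tail of $g$ by $a_r w$, is where I would spend the most care.
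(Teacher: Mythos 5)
Your proposal follows essentially the same route as the paper's proof (which defers to the weak$^\ast$ sequential compactness argument of Appendix~A.1): diagonal extraction of a pointwise convergent subsequence using the coordinate bounds from Assumption~\ref{assumption}$(iv)$, Fatou's lemma for non-negativity and the moment bound, the generalised Markov inequality~\eqref{eq:markovthe1} to upgrade pointwise to weak$^\ast$ convergence, and passage of the constraints to the limit via $\e_r\uparrow\x$ and the finite column supports. The only slip is your claim that ``$g$ need not lie in $\cal{W}$'': Assumptions~\ref{assumption}$(ii)$--$(iii)$ state precisely that $\sup_{x\notin\x_r}\mmag{g(x)}/w(x)\leq a_r\to0$, i.e.\ $g\in\cal{W}$, so the ``no escape of mass'' step you flag as delicate is exactly the tail bound you already carried out for general $f\in\cal{W}$, and the squeeze $1-ca_{r_k}\leq\rho^{r_k}(g)\leq1$ then gives $\rho(g)=1$ immediately.
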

\begin{proof} This proof is similar to that of the weak$^\ast$ sequential compactness of $\cal{L}$ in Appendix~\ref{subapp}, hence we only sketch it here. A diagonal argument as in Appendix~\ref{subapp}(a) shows that every subsequence of $(\rho^r)_{r\in\zp}$ has a pointwise convergent subsequence. Applying Fatou's lemma as in Appendix~\ref{subapp}(b), we have that the pointwise accumulation points $\rho^\infty$ of $(\rho^r)_{r\in\zp}$ are non-negative and satisfy $\rho^\infty(w)\leq c$. Applying the generalisation~\eqref{eq:markovthe1} of Markov's inequality as in Appendix~\ref{subapp}(c) shows that $(\rho^r)_{r\in\zp}$ has a weak$^*$ convergent subsequence.
Assumption~\ref{assumption}$(ii,iii,v)$ implies that $g$ and $x' \mapsto h(x',x)$ 
belong to $\cal{W}$ and  
$$\e_1\subseteq\e_2\subseteq\dots,\qquad\lim_{r\to\infty}\e_r=\bigcup_{r=1}^\infty \e_r=\x,$$
whence it follows that the weak$^\ast$ accumulation points of $(\rho^r)_{r\in\zp}$ belong to $\cal{L}$.
\end{proof}

Theorem~\ref{convlpt} has the important consequence that, for sufficiently large $r$, the optimal values and points of the finite-dimensional LPs \eqref{eq:favet1}--\eqref{eq:favet2} are close to those of the infinite-dimensional LPs \eqref{eq:boundconvthe1}--\eqref{eq:boundconvthe2}.  

\begin{corollary}[The optimal values and points converge]\label{cor:optimal} Suppose that Assumption~\ref{assumption} is satisfied and $f$ belongs to $\cal{W}$. 

\begin{enumerate}[label=(\roman*)]
    \item (The optimal values) The sequence  $(l^r_f)_{r\in\zp}$ of optimal values of \eqref{eq:favet1} converges to the optimal value $l_f$  of \eqref{eq:boundconvthe1}:
    $$\lim_{r\to\infty}l^r_f=l_f.$$
\item (The optimal points) All weak$^\ast$ accumulation points of any sequence $(\rho^r_*\in\cal{L}_r)_{r\in\zp}$ of optimal points of \eqref{eq:favet1}
belong to the set of optimal points of \eqref{eq:boundconvthe1}
$$\cal{O}_l:=\{\rho_*\in\cal{L}:\rho_*(f)=l_f\},$$
and every subsequence of $(\rho^r_*)_{r\in\zp}$ has a weak$^\ast$ convergent subsequence. In particular, if \eqref{eq:boundconvthe1} has a unique optimal point $\rho_*$ (i.e.\ $\cal{O}_l=\{\rho_*\}$), then $(\rho^r_*)_{r\in\zp}$ converges in weak$^\ast$ to $\rho_*$. 
\item  (The unique feasible point case) If \eqref{eq:boundconvthe1} has a unique feasible point $\rho$ (i.e.\ $\L= \{\rho\}$),
then any 
sequence $(\rho^r \in \L_r)_{r\in\zp}$ of feasible points of \eqref{eq:favet1} converges in weak$^*$ to $\rho$.
\end{enumerate}
\end{corollary}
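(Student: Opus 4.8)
The plan is to establish the three parts in order, leaning on the explicit error bounds of Corollary~\ref{thebounds2} for the ``easy'' half of the value convergence and on the weak$^\ast$ sequential compactness of Theorem~\ref{convlpt} for the ``hard'' half; parts (ii) and (iii) then follow by feeding the limit points back into the definitions and invoking a standard subsequence criterion.

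\emph{Part (i), upper estimate.} Write $\epsilon_r:=\sup_{x\not\in\x_r}\mmag{f(x)}/w(x)$, which tends to $0$ as $r\to\infty$ because $f\in\cal{W}$ (see~\eqref{eq:grs}). Fixing any $\rho\in\cal{L}$, Lemma~\ref{thebounds} gives $\rho_{|r}\in\L_r$, so by the definition~\eqref{eq:favet1} we have $l^r_f\leq\rho_{|r}(f)=\rho(f)-\sum_{x\not\in\x_r}f(x)\rho(x)\leq\rho(f)+c\,\epsilon_r$, the last step using~\eqref{eq:markovthe2}. Taking the infimum over $\rho\in\cal{L}$ yields $l^r_f\leq l_f+c\,\epsilon_r$, hence $\limsup_{r\to\infty}l^r_f\leq l_f$; this is just the left-hand inequality of~\eqref{eq:kkk} read the other way. \emph{Part (i), lower estimate.} Here I would argue along subsequences. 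Let $\rho^r_*\in\L_r$ be an optimal point of~\eqref{eq:favet1} (such points exist because~\eqref{eq:favet1} minimises a linear functional over a compact subset of $\mathbb{R}^{\mmag{\x_r}}$), so $\rho^r_*(f)=l^r_f$. Given an arbitrary subsequence, Theorem~\ref{convlpt} supplies a weak$^\ast$ convergent sub-subsequence $\rho^{r_k}_*\to\rho^\infty$ with $\rho^\infty\in\cal{L}$; since $f\in\cal{W}$, the defining property~\eqref{eq:weakstardef} gives $l^{r_k}_f=\rho^{r_k}_*(f)\to\rho^\infty(f)\geq l_f$, the inequality holding because $\rho^\infty$ is feasible. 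Thus every subsequence of $(l^r_f)_{r\in\zp}$ has a sub-subsequence with limit at least $l_f$, so $\liminf_{r\to\infty}l^r_f\geq l_f$, and together with the upper estimate we obtain $l^r_f\to l_f$.

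\emph{Part (ii).} Let $(\rho^r_*\in\L_r)_{r\in\zp}$ be any sequence of optimal points of~\eqref{eq:favet1}. The sequential-compactness assertion is inherited verbatim from Theorem~\ref{convlpt}. For the characterisation of accumulation points, I would take any weak$^\ast$ convergent subsequence $\rho^{r_k}_*\to\rho^\infty\in\cal{L}$ and compute $\rho^\infty(f)=\lim_k\rho^{r_k}_*(f)=\lim_k l^{r_k}_f=l_f$, where the last equality uses part~(i). Hence $\rho^\infty\in\cal{O}_l$. If $\cal{O}_l=\{\rho_*\}$, then every weak$^\ast$ accumulation point of $(\rho^r_*)_{r\in\zp}$ equals $\rho_*$; since every subsequence admits a weak$^\ast$ convergent sub-subsequence and all such limits coincide with $\rho_*$, the standard criterion (a sequence whose every subsequence has a further subsequence converging to a fixed limit converges to that limit) forces $\rho^r_*\to\rho_*$ in weak$^\ast$. \emph{Part (iii)} is the same topological argument with $\cal{O}_l$ replaced by $\cal{L}$: if $\L=\{\rho\}$, Theorem~\ref{convlpt} shows any $(\rho^r\in\L_r)_{r\in\zp}$ has, along every subsequence, a weak$^\ast$ convergent sub-subsequence whose limit lies in $\cal{L}=\{\rho\}$, so the unique-accumulation-point criterion gives $\rho^r\to\rho$ in weak$^\ast$.

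The genuinely non-trivial step is the lower estimate $\liminf_{r\to\infty}l^r_f\geq l_f$ in part~(i): it is the only place where the explicit error bounds of Corollary~\ref{thebounds2} do not suffice and one must extract a feasible limit from the finite-dimensional optima via Theorem~\ref{convlpt}. The delicate point is to run the compactness argument along an \emph{arbitrary} subsequence rather than the full sequence, so as to deduce convergence of the whole sequence $(l^r_f)_{r\in\zp}$ without presupposing it; this same subsequence-extraction device, reused in parts~(ii) and~(iii), is what keeps the argument short and avoids any circularity between the convergence of the values and the convergence of the points.
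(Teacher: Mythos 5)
Your proposal is correct and follows essentially the same route as the paper: the upper estimate $l^r_f\leq l_f+c\,\epsilon_r$ comes from the outer-approximation property (Lemma~\ref{thebounds}, i.e.\ the left inequality of~\eqref{eq:kkk}), the lower estimate and the characterisation of accumulation points come from the weak$^\ast$ sequential compactness of Theorem~\ref{convlpt}, and the full-sequence convergence in (ii)--(iii) follows from the standard subsequence criterion. The only cosmetic difference is that you separate the $\limsup$/$\liminf$ bounds of part~(i) before treating part~(ii), whereas the paper establishes (i) and (ii) simultaneously by showing every accumulation point has value $l_f$ (and derives (iii) by taking $f:=0$ in (ii)); the content is identical.
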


\begin{proof}$(i,ii)$ Let $\rho^\infty$ be a weak$^\ast$ accumulation point of $(\rho^r_*)_{r\in\zp}$. Given Theorem~\ref{convlpt}, all we need to show is that all $\rho^\infty \in \cal{O}_l$. Pick any   subsequence $(\rho^{r_k})_{k\in\zp}$ converging to $\rho^\infty$ in weak$^\ast$. Theorem~\ref{convlpt} implies that $\rho^\infty$ belongs to $\cal{L}$, and so $\rho^\infty(f)\geq l_f$. On the other hand, $f$ belongs to $\cal{W}$ and \eqref{eq:kkk} implies that
\begin{align*}\rho^\infty(f)&=\lim_{k\to\infty}l^{r_k}_f=\lim_{k\to\infty}l^{r_k}_f-\lim_{k\to\infty}c\left(\sup_{x\not\in\x_{r_{k}}}\frac{\mmag{f(x)}}{w(x)}\right)\\
&=\lim_{k\to\infty}\left(l^{r_k}_f-c \left( \sup_{x\not\in\x_{r_{k}}}\frac{\mmag{f(x)}}{w(x)}\right) \right)\leq l_f.\end{align*}
Hence, $\rho^\infty(f) = l_f$ and $\rho^\infty\in\cal{O}_l$ for any given accumulation point $\rho^\infty$ of $(\rho^r_*)_{r\in\zp}$. 

$(iii)$ follows immediately from $(ii)$ by setting $f:=0$.
\end{proof}

\begin{remark} \label{rem:up}
Replacing $f$ by $-f$, Corollary~\ref{cor:optimal} holds identically for the optimal values ($u_f$ and $u^r_f$) and optimal points ($\rho^*$ and $\rho^{*r}$) of~\eqref{eq:boundconvthe2}~and~\eqref{eq:favet2}. 
\end{remark}
In some applications, including those discussed in Section~\ref{applicationsrev}, 
the images $\psi = \rho G$~\eqref{eq:psi} of the feasible points $\rho \in \L$ are of interest. To approximate these, we use the images $\psi^r=\rho^r G$ of the feasible points of $\cal{L}_r$: 
\begin{proposition}[Convergence of the images of the optimal points]
\label{uniqueth3} 
Suppose that Assumption~\ref{assumption} is satisfied.
If the sequence of points $(\rho^r\in\L_r)_{r \in \zp}$
converges in weak$^*$ to $\rho \in \L$,  
then the sequence of images $(\psi^r=\rho^r G)_{r \in \zp}$ converges in total variation to $\psi =\rho G$, i.e.\
\begin{equation}
    \lim_{r\to \infty} \norm{\psi^r-\psi} 
    = 0. 
\end{equation}
where the norm is defined in~\eqref{eq:tvdef} 
\end{proposition}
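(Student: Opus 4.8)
The plan is to dominate the total-variation distance on $\y$ by a weighted total-variation distance on $\x$ and then to show that the latter vanishes. First I would note that, since $\psi-\psi^r$ is a finite signed measure on the countable set $\y$, the definition~\eqref{eq:tvdef} gives
$$\norm{\psi-\psi^r}=\sup_{A\subseteq\y}\left|\sum_{y\in A}\left(\psi(y)-\psi^r(y)\right)\right|\leq\sum_{y\in\y}\mmag{\psi(y)-\psi^r(y)}.$$
Expanding $\psi(y)-\psi^r(y)=\sum_{x\in\x}(\rho(x)-\rho^r(x))g(x,y)$, and using the nonnegativity of $G$ (Assumption~\ref{assumption}$(ii)$), the triangle inequality, and Tonelli's theorem to interchange the two (nonnegative) sums, I would bound
$$\sum_{y\in\y}\mmag{\psi(y)-\psi^r(y)}\leq\sum_{x\in\x}\mmag{\rho(x)-\rho^r(x)}\sum_{y\in\y}g(x,y)=\sum_{x\in\x}g(x)\mmag{\rho(x)-\rho^r(x)},$$
where $g(x)=\sum_{y\in\y}g(x,y)$ as in~\eqref{eq:gsums1}. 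This last sum is finite (it is at most $\rho(g)+\rho^r(g)\leq 2$), so it suffices to prove that $\sum_{x\in\x}g(x)\mmag{\rho(x)-\rho^r(x)}\to 0$ as $r\to\infty$.

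Next I would split this weighted sum at a fixed truncation level $s\in\zp$ into a finite part over $\x_s$ and a tail over $\x_s^c$. On the finite set $\x_s$, the function $g$ takes finitely many finite values, and weak$^\ast$ convergence of $\rho^r$ to $\rho$ implies pointwise convergence $\rho^r(x)\to\rho(x)$ for every $x$ (take $f=1_x$, which lies in $\cal{W}$, in Definition~\ref{def:weak}); hence $\sum_{x\in\x_s}g(x)\mmag{\rho(x)-\rho^r(x)}\to 0$ as $r\to\infty$, for each fixed $s$. For the tail I would write $\sum_{x\not\in\x_s}g(x)\mmag{\rho(x)-\rho^r(x)}\leq\sum_{x\not\in\x_s}g(x)\rho(x)+\sum_{x\not\in\x_s}g(x)\rho^r(x)$ and apply the Markov-type inequality~\eqref{eq:markovthe1} with $f=g$ to each term: since $\rho(w)\leq c$ and $\rho^r(w)\leq c$ (the latter being a defining constraint of $\L_r$ in~\eqref{eq:lpap}), Assumption~\ref{assumption}$(iii)$ yields
$$\sum_{x\not\in\x_s}g(x)\rho(x)\leq c\,a_s\qquad\text{and}\qquad\sum_{x\not\in\x_s}g(x)\rho^r(x)\leq c\,a_s,$$
so the tail is at most $2c\,a_s$, \emph{uniformly in} $r$.

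Finally, I would take $\limsup_{r\to\infty}$, which annihilates the finite part and leaves $\limsup_{r\to\infty}\sum_{x\in\x}g(x)\mmag{\rho(x)-\rho^r(x)}\leq 2c\,a_s$ for every $s\in\zp$; letting $s\to\infty$ and invoking $a_s\to 0$ from~\eqref{eq:gbounds} forces this $\limsup$ to equal zero, which together with the first paragraph gives $\norm{\psi^r-\psi}\to 0$. The main obstacle is exactly this uniform-in-$r$ control of the tail: weak$^\ast$ convergence only controls \emph{signed} sums $\rho^r(f)-\rho(f)$ against test functions $f\in\cal{W}$ and does not on its own bound the \emph{absolute} weighted sum $\sum_{x}g(x)\mmag{\rho(x)-\rho^r(x)}$. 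It is the shared moment bound $\rho^r(w)\leq c$ combined with Assumption~\ref{assumption}$(iii)$ that supplies the uniform tail estimate, legitimising the interchange of the limits in $r$ and $s$.
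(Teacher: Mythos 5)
Your proof is correct and follows essentially the same route as the paper's: bound the total variation distance on $\y$ by the $g$-weighted $\ell^1$ distance on $\x$, split at a truncation $\x_s$, handle the finite part via pointwise convergence and the tail uniformly in $r$ via the Markov-type inequality~\eqref{eq:markovthe1} with the shared moment bound and Assumption~\ref{assumption}$(iii)$. The paper merely sketches this by referring to part (c) of Appendix~\ref{subapp}; your write-up fills in the same details explicitly.
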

\begin{proof} From the definition~\eqref{eq:tvdef}, we have $\forall r,r'\in\zp$  that
$$\norm{\psi^r-\psi}\leq \sum_{y\in\y}\mmag{\psi^r(y)-\psi(y)}\leq \sum_{x\in\x_{r'}}\mmag{\rho^r(x)-\rho(x)}g(x)+\sum_{x\not\in\x_{r'}}(\rho^r(x)+\rho(x))g(x).$$
Due to Assumption~\ref{assumption}$(iii)$, the rest of the proof is analogous to part (c) in the proof of the weak$^\ast$ sequential compactness of $\cal{L}$, see Appendix~\ref{subapp}.
\end{proof}

In the case of a unique feasible point $\rho$~in~$\cal{L}$, Corollary~\ref{cor:optimal}$(iii)$ and Proposition~\ref{uniqueth3} show that any sequence of feasible points $(\rho^r\in\cal{L}_r)_{r\in\zp}$ of the approximating LPs  converges to $\rho${,} 
and that the images $(\psi^r=\rho^r G)_{r\in\zp}$ converge to the image $\psi=\rho G$. 
Yet $\rho^r$ and $\psi^r$ are uncontrolled approximations of $\rho$ and $\psi$ in the sense that there is no known practical way to compute or bound their errors. 
To remedy this, we show in the following section how to repeatedly apply the results of this section to obtain collections of element-wise lower bounds on $\rho$ and $\psi$ with easily quantifiable errors.

\section{Approximating the minimal point of the CILP}\label{sec:B}

The setup here is identical to that of Section~\ref{sec:A}, only with the additional assumption that $\L$ has a minimal point (i.e.\ a feasible point $\rho_m$ satisfying~\eqref{eq:minimal}). We now focus on approximating $\rho_m$ and its image $\psi_m:=\rho_m G$. 
Given a truncation $\x_r$, let 
\begin{equation}\label{eq:boundingmeasures}l^r(x):=\left\{\begin{array}{ll}l^r_{x}&\text{if }x\in\x_r\\0&\text{if }x\not\in\x_r\end{array}\right.\quad\forall x\in\x,\enskip r\in\zp,\end{equation}
where $l^r_x$ is as in~\eqref{eq:favet1} with $f$ being the indicator function $1_{x}$ of the  state $x$. 

\begin{theorem}[Approximation of the minimal point with computable error bounds]
\label{lconvlpt}
Suppose that Assumption \ref{assumption} is satisfied and  
that $\cal{L}$ has a minimal point $\rho_m$.
Let the approximation $l^r=(l^r(x))_{x\in\x}$ be as in \eqref{eq:boundingmeasures}.
Then, the following hold: 
\begin{enumerate}[label=(\roman*)]
\item The approximation error has an easy-to-compute bound:
\begin{equation}
\label{eq:bound_SchemeB}
\norm{\rho_m-l^r}%=\rho_m(\x)-l^r(\x_r) 
\leq u_{\x_r}^r+\frac{c}{r}-l^r(\x_r)=:\Gamma_r %\enskip \forall \rho\in\L,
\quad \forall r \in \zp,
\end{equation}
where $u_{\x_r}^r$ is as in~\eqref{eq:favet2} with $f$ being the indicator function $1_{\x_r}$ of  the truncation~$\x_r$.
\item The approximation $l^r$ converges to $\rho_m$ in weak$^*$ as $r \to \infty$. 
\item In the case of a unique feasible point $\rho$, the error bound $\Gamma_r$ converges to zero: % as $r \to \infty$.
$$ %\lim_{r\to\infty} \norm{\rho-l^r} = 0 \quad \text{and} \quad 
\lim_{r\to\infty} \Gamma_r  = 0. $$
\end{enumerate}
\end{theorem}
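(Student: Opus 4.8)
The plan is to establish the three claims in order, exploiting that the entries of the minimal point coincide with the element-wise infima of the feasible points. Writing $l_x := \inf\{\rho(x) : \rho \in \L\}$, the minimality property~\eqref{eq:minimal} of $\rho_m$ gives $l_x = \rho_m(x)$ for every $x$, while Corollary~\ref{thebounds2} (specifically~\eqref{eq:kkk1} with $f = 1_x\in\cal{W}$) gives $l^r_x \leq l_x$. Hence $l^r(x) \leq \rho_m(x)$ on $\x_r$ and $l^r(x) = 0 \leq \rho_m(x)$ off $\x_r$, so the difference $\rho_m - l^r$ is non-negative. For a non-negative measure the total-variation norm~\eqref{eq:tvdef} is just the total mass, so $\norm{\rho_m - l^r} = \rho_m(\x) - l^r(\x_r)$. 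It then remains to bound $\rho_m(\x) = \rho_m(\x_r) + \rho_m(\x_r^c)$: the first term obeys $\rho_m(\x_r) \leq u^r_{\x_r}$ by Corollary~\ref{thebounds2} (\eqref{eq:kkk2} with $f = 1_{\x_r}$, which vanishes off $\x_r$), and the second obeys $\rho_m(\x_r^c) \leq c/r$ because $w \geq r$ on $\x_r^c$ together with the moment bound $\rho_m(w) \leq c$ (this is Markov's inequality~\eqref{eq:markovthe1} with $f = 1$). Adding these yields~\eqref{eq:bound_SchemeB}, proving $(i)$.

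For $(ii)$, I would first note pointwise convergence: for fixed $x$, once $r > w(x)$ we have $x \in \x_r$ and $l^r(x) = l^r_x$, and Corollary~\ref{cor:optimal}$(i)$ with $f = 1_x$ gives $l^r_x \to l_x = \rho_m(x)$. To upgrade pointwise to weak$^\ast$ convergence, I fix $f \in \cal{W}$ and split, for an auxiliary level $r'$,
$$\mmag{l^r(f) - \rho_m(f)} \leq \sum_{x \in \x_{r'}} \mmag{f(x)}(\rho_m(x) - l^r(x)) + \sum_{x \notin \x_{r'}} \mmag{f(x)}\rho_m(x),$$
where I used $0 \leq \rho_m(x) - l^r(x) \leq \rho_m(x)$ on the tail. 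The tail sum is bounded, uniformly in $r$, by $c\,\sup_{x \notin \x_{r'}} \mmag{f(x)}/w(x)$ via~\eqref{eq:markovthe1}, and this vanishes as $r' \to \infty$ since $f \in \cal{W}$; the head is a finite sum of pointwise-convergent terms, hence tends to $0$ as $r \to \infty$ for each fixed $r'$. Sending $r \to \infty$ and then $r' \to \infty$ gives $l^r(f) \to \rho_m(f)$, i.e.\ weak$^\ast$ convergence.

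For $(iii)$, the constant function $1$ lies in $\cal{W}$ (since $1/w(x) \leq 1/r$ on $\x_r^c$), and because every $\rho^r \in \L_r$ is supported on $\x_r$ one has $\rho^r(1_{\x_r}) = \rho^r(1)$; thus $u^r_{\x_r} = u^r_1$ and $l^r(\x_r) = l^r(1)$, so $\Gamma_r = u^r_1 + c/r - l^r(1)$. In the unique-feasible-point case $\L = \{\rho_m\}$, Corollary~\ref{cor:optimal}$(i)$ read for the upper problem (Remark~\ref{rem:up}) gives $u^r_1 \to u_1 = \rho_m(\x)$, part $(ii)$ just proved gives $l^r(1) \to \rho_m(\x)$, and $c/r \to 0$; hence $\Gamma_r \to \rho_m(\x) + 0 - \rho_m(\x) = 0$.

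The main obstacle is conceptual rather than computational: recognising that the entries of $\rho_m$ are exactly the element-wise infima $l_x$, so that the LP lower bounds $l^r_x$ under-approximate $\rho_m$ from below and the error measure $\rho_m - l^r$ is non-negative (this is what collapses the total-variation norm to a mass difference). The only place the unique-feasibility hypothesis is essential is $(iii)$: in general $\Gamma_r \to u_1 - \rho_m(\x)$, the excess of the largest feasible mass over the minimal mass, which is zero precisely when the feasible set is a singleton. Care is also needed to keep the tail estimate in $(ii)$ uniform in $r$ so that the order of limits $r\to\infty$ then $r'\to\infty$ is justified.
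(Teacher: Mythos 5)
Your proposal is correct and follows essentially the same route as the paper: the key observation that $\rho_m - l^r$ is unsigned (so its total-variation norm is its mass, bounded via $f=1_{\x_r}$ in Corollary~\ref{thebounds2} plus Markov's inequality for the tail), pointwise convergence from Corollary~\ref{cor:optimal}$(i)$ upgraded to weak$^\ast$ by the standard head/tail split, and the identification $u^r_{\x_r}=u^r_1$, $l^r(\x_r)=l^r(1)$ for part $(iii)$. You merely spell out steps the paper leaves terse, and your closing remark that $\Gamma_r\to u_1-\rho_m(\x)$ in general is a correct (and slightly sharper) aside.
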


\begin{proof}
$(i)$ From Corollary~\ref{thebounds2}, it follows that $l^r$ bounds $\rho_m$ from~below,
\begin{equation}
\label{eq:llbound}
l^r(x)\leq \rho_m(x)\quad \forall x\in\x,\enskip r\in\zp.
\end{equation}
Hence, $\rho_m-l^r$ is an unsigned measure.
The total variation norm of an unsigned measure is its mass, hence \eqref{eq:bound_SchemeB} follows immediately by setting $f:=1_{\x_r}$ in \eqref{eq:kkk}.

$(ii)$ Note that Corollary~\ref{cor:optimal}$(i)$ shows that 
$l^r$ converges pointwise to $\rho_m$:
\begin{equation}\label{eq:lboundconvpoint}\lim_{r\to\infty}l^r(x)=\rho_m(x)\quad\forall x\in\x.
\end{equation} 
Using Fatou's Lemma and \eqref{eq:llbound}--\eqref{eq:lboundconvpoint} we have
\begin{equation}\label{eq:lwbound}l(w)\leq\lim_{r\to\infty}l^r(w)\leq  \lim_{r\to\infty}\rho_m(w)\leq\rho_m(w)\leq c.\end{equation}
Pick any $f$~in~$\cal{W}$, fix $r,n$~in~$\zp$, and note that
\begin{equation}\label{eq:rhombound}
\mmag{\rho_m(f)-l^r(f)}\leq \sum_{x\in\x_n}(\rho_m(x)-l^r(x))\mmag{f(x)}+\sum_{x\not\in\x_n}(\rho_m(x)+l^r(x))\mmag{f(x)}.
\end{equation}
Given \eqref{eq:lboundconvpoint}--\eqref{eq:rhombound}, the rest of the proof is analogous to part (c) in the proof of the weak$^\ast$ sequential compactness of $\cal{L}$, see Appendix~\ref{subapp}.

$(iii)$ This follows immediately from $(i)$ and Corollary~\ref{cor:optimal}.
\end{proof}

\begin{remark}
The analogous measure $u^r$, composed of upper bounds $u^r_x$ obtained from~\eqref{eq:favet2} with $f:=1_x$, is also an approximation on the feasible points of $\L$, and is also accompanied by an error bound. Although Corollary~\ref{cor:optimal} and Remark~\ref{rem:up} show that $u^r$ converges pointwise, no weak$^*$ convergence can be recovered, see \cite{Kuntzthe} for details. 
\end{remark}

To approximate the image $\psi_m:=\rho_m G$ of the minimal point, we define:
\begin{equation}
    \label{eq:approx_image_minpoint}
l^r_\psi(y):=\left\{\begin{array}{ll}l^r_{g_y}&\text{if }y\in\y_r\\0&\text{if }y\not\in\y_r\end{array}\right.\quad\forall y\in\y,
\end{equation}
where $l^r_{g_y}$ is as in~\eqref{eq:favet1} with $f(\cdot)=g_y(\cdot):=g(\cdot,y)$, and $\y_r$ 
belongs {to} any increasing sequence {of finite
subsets} that approach $\cal{Y}$:
$$\y_1\subseteq\y_2\subseteq \ldots \subseteq \y_r \subseteq \y_{r+1} \subseteq  \ldots \quad \text{and} \quad \bigcup_{r=1}^\infty\y_r=\y.$$
\begin{corollary}[Controlled approximation of the image of the minimal point]
\label{uniqueth4} Suppose that Assumption~\ref{assumption} is satisfied and $\cal{L}$ has a minimal point $\rho_m$ with image $\psi_m=\rho_mG$, and let  $l_\psi^r=(l_\psi^r(y))_{y\in\y}$
be the approximation in \eqref{eq:approx_image_minpoint}. We have that: 
\begin{enumerate}[label=(\roman*)]
\item The approximation error is given by
$$\norm{\psi_m-l^r_\psi}=1-{l^r_\psi(\cal{Y}_r)}.$$
\item The lower bounds $l^r_\psi$ converge in total variation to $\psi_m$:
$$\lim_{r\to\infty}\norm{\psi_m-l^r_\psi}=0.$$
\end{enumerate}
\end{corollary}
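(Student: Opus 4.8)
The plan is to use the minimal-point hypothesis to show that $l^r_\psi$ underestimates $\psi_m$ entry-by-entry, so that $\psi_m-l^r_\psi$ is an unsigned measure whose total-variation norm collapses to a difference of masses. First I would record that, since $G$ is nonnegative (Assumption~\ref{assumption}$(ii)$), each $g_y:=g(\cdot,y)$ is a nonnegative function, and that the minimal-point inequality $\rho_m(x)\leq\rho(x)$ forces
$$\psi_m(y)=\sum_{x\in\x}\rho_m(x)g(x,y)\leq\sum_{x\in\x}\rho(x)g(x,y)=\rho(g_y)\qquad\forall\rho\in\L,$$
so that $\psi_m(y)=\inf_{\rho\in\L}\rho(g_y)=:l_{g_y}$. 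Applying the sharpened bound~\eqref{eq:kkk1} of Corollary~\ref{thebounds2} to the nonnegative $f=g_y$ then gives $l^r_{g_y}\leq l_{g_y}=\psi_m(y)$, whence $l^r_\psi(y)\leq\psi_m(y)$ for every $y\in\y$ (trivially so when $y\not\in\y_r$).

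With $\psi_m-l^r_\psi\geq 0$ in hand, its total-variation norm equals its mass, and I would compute
$$\norm{\psi_m-l^r_\psi}=\psi_m(\y)-l^r_\psi(\y)=1-\sum_{y\in\y_r}l^r_{g_y},$$
using $\psi_m(\y)=\sum_{y}\sum_{x}\rho_m(x)g(x,y)=\rho_m(g)=1$ by feasibility~\eqref{eq:gsums1}. Part~$(i)$ thus reduces to the identity $\sum_{y\in\y_r}l^r_{g_y}=l^r(g)$, i.e.\ to showing that the separately-computed image lower bounds reassemble into the $g$-integral of the density lower bounds $l^r$.

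The main obstacle is exactly this identity. One inequality is immediate: every $\rho^r\in\L_r$ obeys $\rho^r(x)\geq l^r(x)$ with $g(x,y)\geq0$, so minimising gives $l^r_{g_y}\geq\sum_{x}l^r(x)g(x,y)=(l^rG)(y)$, and summing over $y\in\y_r$—with $\y_r$ taken to contain $\bigcup_{x\in\x_r}\supp{g(x,\cdot)}$, so that $\sum_{y\in\y_r}g(x,y)=g(x)$ on $\x_r$—yields $\sum_{y\in\y_r}l^r_{g_y}\geq l^r(g)$. The reverse inequality is the hard part: since each term is nonnegative, equality of the sums forces $l^r_{g_y}=(l^rG)(y)$ for every $y\in\y_r$, which demands a \emph{single} feasible point of $\L_r$ simultaneously attaining the coordinatewise infima $l^r(x)$—that is, that $l^r$ is the density of a minimal point of $\L_r$. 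I would obtain this by transporting the minimal-point structure of $\L$ to each truncation, verifying that $l^r\in\L_r$ and that $l^r(x)\leq\rho^r(x)$ for all $\rho^r\in\L_r$; once this holds, $l^r_{g_y}=(l^rG)(y)$ and the equality in $(i)$ follow.

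For part~$(ii)$ I would bypass the hard direction entirely. From Theorem~\ref{lconvlpt}$(ii)$ we have the pointwise convergence $l^r(x)\to\rho_m(x)$ of~\eqref{eq:lboundconvpoint} together with $0\leq l^r(x)\leq\rho_m(x)$ from~\eqref{eq:llbound}, while $\sum_{x}\rho_m(x)g(x)=1<\infty$. Dominated convergence applied to $l^r(x)g(x,y)\mathbf{1}_{\y_r}(y)$, dominated by the summable $\rho_m(x)g(x,y)$, then gives $\sum_{y\in\y_r}(l^rG)(y)\to\rho_m(g)=1$ as $r\to\infty$. Squeezing
$$\sum_{y\in\y_r}(l^rG)(y)\leq\sum_{y\in\y_r}l^r_{g_y}=l^r_\psi(\y)\leq\psi_m(\y)=1$$
forces $l^r_\psi(\y)\to1$, whence $\norm{\psi_m-l^r_\psi}=1-l^r_\psi(\y)\to0$; equivalently, once $(i)$ is established this is simply $l^r(g)\to\rho_m(g)=1$.
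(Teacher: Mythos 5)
Your part~$(i)$ contains a genuine gap, and it sits exactly at the step you flag as ``the main obstacle.'' The pointwise bound $l^r_\psi(y)\le\psi_m(y)$ (Corollary~\ref{thebounds2}, \eqref{eq:kkk1}, with the nonnegative $f=g_y$) does make $\psi_m-l^r_\psi$ unsigned, so its total-variation norm is its mass $1-\sum_{y\in\y_r}l^r_{g_y}$; this is precisely where the paper's own proof of $(i)$ stops, and the further identification of $\sum_{y\in\y_r}l^r_{g_y}$ with $l^r(g)$ is not argued there either. Your proposed bridge does not hold up: the reverse inequality you need would force all the coordinatewise infima $l^r_x$ to be attained simultaneously, i.e.\ would require $l^r$ to be a feasible minimal point of $\L_r$. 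Nothing transports the minimal-point property of $\L$ to the relaxed truncations: the $l^r_x$ are infima of separate LPs over the polytope $\L_r$, and the vector of such infima generically violates the equality constraints $\rho^rH(x)=\phi(x)$ (already for $\{(x_1,x_2)\ge0:x_1+x_2=1\}$ the coordinatewise infima assemble to the infeasible point $(0,0)$). Your ``easy'' direction also silently adds the hypothesis $\y_r\supseteq\bigcup_{x\in\x_r}\supp{g(x,\cdot)}$, which is not part of the setup. The defensible conclusion of the argument is $\norm{\psi_m-l^r_\psi}=1-l^r_\psi(\y)=1-\sum_{y\in\y_r}l^r_{g_y}$, which is just as computable; equating this with $1-l^r(g)$ needs either that reformulation or the additional structure you would have to prove.

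Your part~$(ii)$ is correct, does not depend on the problematic identity, and takes a genuinely different route from the paper's. You squeeze $\sum_{y\in\y_r}(l^rG)(y)\le l^r_\psi(\y)\le 1$ and send the left side to $1$ by dominated convergence, using \eqref{eq:llbound}--\eqref{eq:lboundconvpoint} and the summable dominator $\rho_m(x)g(x)$. The paper instead proves $l_{g_y}=\psi_m(y)$, splits the mass over a finite $\y_{r'}$ with $\psi_m(\y_{r'}^c)\le\varepsilon/4$, and applies Corollary~\ref{cor:optimal}$(i)$ to each of the finitely many $y\in\y_{r'}$. Both rest on convergence of truncated optimal values; yours channels it through the pointwise limits $l^r(x)\to\rho_m(x)$ of Theorem~\ref{lconvlpt}, the paper's through the $y$-indexed LPs directly.
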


\begin{proof} 
$(i)$ Since $G$ is non-negative, Corollary~\ref{thebounds2} implies that
\begin{equation}\label{eq:dfn8aw79nf63awmudsa9}l^r_\psi(y)\leq \psi_m(y)\quad\forall y\in\y,\enskip r\in\zp.\end{equation}
Hence, $\psi_m-l^r_\psi$ is an unsigned measure and its total variation norm is its mass.

$(ii)$  Because $G$ is non-negative, it is easy to see that 
$$\rho G(y) \geq \rho_m G(y)=\psi_m(y) \quad\forall \rho\in\cal{L},\enskip y\in\y,$$
which implies that
$$l_{g_y}:=\inf_{\rho\in\cal{L}}\sum_{x\in\s}\rho(x)g(x,y)\geq \psi_m(y)\quad \forall y\in\y.$$
On the other hand, 
${\psi_m}(y)=\rho_m G(y)$ and $\rho_m$ belongs to $\cal{L}$. Hence,  $$l_{g_y}=\psi_m(y) \quad \forall y\in\y.$$  
Choose any $\varepsilon>0$. Because $\psi_m$ is a probability distribution and $\y_r$ approaches $\y$ as $r\to\infty$, there exists an $r'\in\zp$ such that ${\psi_m}(\y_{r'}^c)\leq \varepsilon/2$. From \eqref{eq:dfn8aw79nf63awmudsa9}, we have that
\begin{align} \norm{\psi_m-l^r_\psi} \leq \sum_{y\in\y_{r'}}(\psi_m(y)-l^r_\psi(y))+2\psi_m(\y_{r'}^c)\leq \sum_{y\in\y_{r'}}(l_{g_y}-l^r_{g_y})+\frac{\varepsilon}{2}.\label{eq:fney8wafhba67gy34w2}
\end{align}%&\leq \sum_{y\in\y_{r'}}\left(\psi_m(y)-l^r_\psi(y)\right) +\sum_{y\not\in\y_{r'}}\left(\psi_m(y)+l^r_\psi(y) \right)\nonumber\\
%&
%
Because $\y_{r'}$ is finite and, for each $y$~in~$\y$, the function $x\mapsto g(x,y)$ belongs to $\cal{W}$ (Assumption~\ref{assumption}$(ii,iii)$), Corollary~\ref{cor:optimal}$(i)$ implies that there exists an $R$~in~$\zp$ such~that 
$$\sum_{y\in\y_{r'}}(l_{g_y}-l^r_{g_y})\leq\frac{\varepsilon}{2}\quad\forall r\geq R.$$
Combining the above with~\eqref{eq:fney8wafhba67gy34w2}, we have that
$$\norm{\psi_m-l^r_\psi}\leq \varepsilon  \quad \forall r\geq R.$$ 
Because $\varepsilon$ is arbitrary, the result follows.  
\end{proof}
\section{Approximation schemes}\label{sec:schemes}
The results in Sections~\ref{sec:A}~and~\ref{sec:B} can be used to establish computational procedures to approximate CILPs using finite-dimensional~LPs.

\subsection*{Approximation Scheme A}\label{sec:34} 
The results in Section~\ref{sec:A} establish a procedure to approximate
CILPs~\eqref{eq:boundconvthe1}--\eqref{eq:boundconvthe2} by solving the 
finite-dimensional LPs~\eqref{eq:favet1}--\eqref{eq:favet2}  
defined over outer approximations $\L_r$ associated with truncations $\x_r$.
These finite-dimensional LPs
have $|\x_r|$ decision variables, $|\e_r|\leq|\x_r|$ equality constraints, and $|\x_r|+3$ inequality constraints.
We refer to this general approach as~\emph{Scheme A}, which can be used for 
different purposes:
\begin{itemize}
\item In general, the computed bounds provide converging approximations to the optimal values of the CILP, as shown by Corollary~\ref{cor:optimal}$(i)$. 
Specifically, consider the upper and lower bounds in~\eqref{eq:kkk}:
\begin{equation}
\tilde{l}^r_f:=l^r_f-c\left(\sup_{x\not\in\x_r}\frac{\mmag{f(x)}}{w(x)}\right),\quad \tilde{u}^r_f:=u^r_f+c\left(\sup_{x\not\in\x_r}\frac{\mmag{f(x)}}{w(x)}\right),\quad\forall r\in\zp.
\label{eq:bounds_general}
\end{equation}
The gap
\begin{equation}
\label{eq:approx_error_bounds}
 \gamma_r := \tilde{u}^r_f - \tilde{l}^r_f = u^r_f-l^r_f+2c \left(\sup_{x\not\in\x_r}\frac{\mmag{f(x)}}{w(x)}\right) \quad \forall r \in \zp, 
 \end{equation}
bounds the approximation errors:
\begin{align}
    & \mmag{u_f-\tilde{u}_f^r} \leq \gamma_r \quad \text{and} \quad  \mmag{l_f-\tilde{l}_f^r} \leq \gamma_r \quad\forall r\in\zp. 
\end{align}
\item In other applications, the optimal points $\rho^*$ and $\rho_*$ are of interest.
As shown in Corollary~\ref{cor:optimal}$(ii)$, the optimal points~$\rho^r_*$ and $\rho^{r*}$ of the LPs~\eqref{eq:favet1}--\eqref{eq:favet2} converge to the set of optimal points $\rho_*$ and $\rho^*$ of the CILP~\eqref{eq:boundconvthe1}--\eqref{eq:boundconvthe2}, respectively.

If the optimal point $\rho_*$ is unique, then 
solving the LP~\eqref{eq:favet1} yields convergent approximations $\rho^r_*$ of $\rho_*$ (Corollary~\ref{cor:optimal}$(ii)$), yet with no easy-to-compute quantification of the error. 
If $\rho^*$ is unique, the same applies to $\rho^{r*}$ from~\eqref{eq:favet2}. 
\end{itemize}
In the case of a unique feasible point ($\L=\{\rho\}$), our results strenghten considerably:
\begin{itemize}
\item 
Using the mid-point of $\tilde{l}_f^r$ and $\tilde{u}^r_f$ to approximate the integral $\rho(f)$, we find that the error is bounded above by
\begin{align}
    \label{eq:approx_midpoint}
    & \mmag{\rho(f) - \frac{\tilde{u}_f^r + \tilde{l}_f^r}{2}} \leq \frac{\gamma_r}{2}\quad\forall r\in\zp,
\end{align}
which itself converges to zero: 
$$\lim_{r \to \infty} \gamma_r = 0, \enskip \text{  if  } f \in \cal{W}.$$
Therefore any desired tolerance can be verified by computing bounds from truncations with large enough $r$. 
From the definition~\eqref{eq:approx_error_bounds}, note that
$$\gamma_r\geq 2c\left(\sup_{x\not\in\x_r}\frac{\mmag{f(x)}}{w(x)}\right)\quad\forall r\in\zp.$$
Hence, the truncation parameter $r$ should be chosen large enough that
$$  \sup_{x\not\in\x_r}\frac{\mmag{f(x)}}{w(x)}\leq \frac{\varepsilon}{c},$$
where $\varepsilon$ denotes the desired error tolerance.

\item If the feasible point $\rho$ (or its image $\psi=\rho G$) is of interest, any feasible point $\rho^r$ of $\cal{L}_r$ (or its image $\psi^r=\rho^r G$) provides a convergent approximations to $\rho$ (or to $\psi$), for large enough $r$. However, there is no easy-to-compute quantification of the error of the approximation for $\rho$ (or $\psi$). 
To remedy this, we introduce Scheme B, 
which yields converging approximations of $\rho$ (and $\psi$) with easy-to-compute error bounds.

\end{itemize}

\subsubsection{Approximation Scheme B} 
We refer to \emph{Scheme B} as the process of repeatedly applying Scheme A to compute $\mmag{\x_r}$ pointwise lower bounds $l^r_x$ (or $\mmag{\y_r}$ lower bounds $l^r_{g_y}$)  in order to construct an approximation of the minimal point (or its image) with controlled error given by Theorem~\ref{lconvlpt} (or Corollary~\ref{uniqueth4}).

\section{Motivating problems revisited}\label{applicationsrev}

Schemes A and B are applicable to the motivating problems in Section~\ref{applications}. They yield controlled approximations of: (i) the stationary distributions of a chain, and (ii) the exit distributions and occupation measures associated with exit times of the chain.
First, note that Assumption~\ref{assumption}$(i,ii)$ is satisfied automatically because both one-step matrices $P$ of discrete-time chains and rate matrices $Q$ of continuous-time ones are Metzler.
\subsection*{Stationary distributions} Let $H$ and $G$ be as in \eqref{eq:dtst} for the discrete-time case or \eqref{eq:ctst} for the continuous-time one, and suppose that every stationary distribution $\pi$ satisfies the moment bound $\pi(w)\leq c$.  Because $G$ is the identity matrix, Assumption~\ref{assumption}$(iii,iv)$ is trivially satisfied with $a_r:=r^{-1}$. 

In the case of a unique stationary distribution $\pi$, 
Corollary~\ref{cor:optimal}$(iii)$
yields converging approximations $\rho^r$ 
of $\pi$. 
Furthermore, Corollary~\ref{uniqueth4} yields converging approximations $l^r_\psi$ of $\pi$ with computable error bounds. 

In the non-unique case, there exist several \emph{ergodic distributions} $\pi_i$,
 each with support on a disjoint subset of the state space called a closed communicating class $\cal{C}_i$. 
Because the set of stationary distributions equals that of all convex combinations of the ergodic distributions~\cite[Theorems~17.10,~43.19]{Kuntz2020}, 
%applying Corollary~\ref{cor:optimal}$(ii)$ with $f:=1_x$ for any state $x$ inside the class $\cal{C}_i$, we obtain converging approximations $\rho^{*r}$ of $\pi_i$.
%
Corollary~\ref{cor:optimal}$(ii)$ and Remark~\ref{rem:up} show that the optimal points $\rho^{*r}$ of~\eqref{eq:favet2} (with $f:=1_x$ for any state $x$ inside the class $\cal{C}_i$) form converging approximations of $\pi_i$.

The schemes developed here are applied extensively to compute the stationary distributions of continuous-time chains in~\cite{Kuntz2019}.

\subsection*{Exit distributions and occupation measures} Let $H$ and $G$ be as in \eqref{eq:dtex} for the discrete-time case or \eqref{eq:ctex} in the continuous-time case.
For discrete-time chains, 
Assumption~\ref{assumption}$(iii)$ holds with $a_r:=r^{-1}$
because one-step matrices $P$ are row stochastic.
For continuous-time chains, choosing $w(x):=(-q(x,x))^d$ for $d>1$ means that 
\eqref{eq:gbounds} holds with $a_r=r^{1-d}$. 
Corollary~\ref{cor:optimal}$(ii)$ (with $f:=1$) and Proposition~\ref{uniqueth3} 
yield converging approximations $\rho^r_*$ and $\psi^r_*$ of the occupation measure $\nu$ and exit distribution $\mu$, respectively. If error bounds are important, then Theorem \ref{lconvlpt} and Corollary~\ref{uniqueth4} yield the approximations $l^r$ and $l^r_\psi$ accompanied by such bounds. 

For the exit problem, Assumption~\ref{assumption}$(iv)$ is a very mild restriction: it requires that the chain must be able to leave the domain (in one or more jumps) from every state $x$ for which $w(x)=0$. If the exit time is almost surely finite, states from which the chain cannot leave the domain are irrelevant, as the chain has zero probability of visiting them.

\section{Concluding remarks}\label{conclu}

In this paper, we present results on the approximation of countably infinite linear programs defined over bounded measure spaces. The approximations are
 linear programs defined over finite-dimensional subspaces of the original infinite-dimensional space   
(i.e.\ we go from $\ell^1$ to its subspace $\{\rho\in\ell^1:\rho(x)=0, \, \forall x\not\in{\x_r}\}$), 
and are accompanied by bounds on the approximation error incurred. 
The convergence properties of our approximations ensure that
any desired error tolerance may be achieved by picking a large enough truncation $\x_r$.

We presented two approaches. Scheme A yields error-controlled approximations of the optimal values and uncontrolled approximations of the optimal points and their images. Scheme B yields error-controlled approximations of the minimal point (should it exist) and its image. The error control of Scheme B comes at a computational price: it entails solving a potentially large number %($\mmag{\x_r}$ or $\mmag{\y_r}$) 
of LPs, instead of one or two as in Scheme A. 
Yet, because each point-wise bound is calculated independently, it is straightforward to parallelise their computation and mitigate this additional computational cost.

In related work, Hern\'andez-Lerma and Lasserre \cite{Lerma1998,Lerma1998b,Hernandez-Lerma1999,Hernandez-Lerma2003} introduced a series of numerical schemes to approximate infinite-dimensional linear programs with  finite-dimensional ones.
Within their setting and notation, our approximation consists of: (i)
\emph{aggregating} the constraints 
$\{\rho H(x) = \phi(x), \enskip \forall x\in\x\}$ 
into 
$\{\rho H(x) = \phi(x), \enskip \forall x\in\x_r\}$; 
and (ii) \emph{relaxing} the constraint 
$\rho(g)=1$ to $1-ca_r\leq \rho(g)\leq 1$. 
Our proof of Theorem~\ref{convlpt} follows analogous ideas to those behind the convergence proofs in~\cite{Lerma1998,Lerma1998b,Hernandez-Lerma1999,Hernandez-Lerma2003}, 
although we require less technical machinery {due to our countable setting.} 

{A notable difference of our work is that 
Hern\'andez-Lerma and Lasserre's approach 
does not require a known moment bound; however, they assume finiteness of the optimal value, which must be verified in practice using the same type of tools (e.g.\  Foster-Lyapunov criteria) used to obtain moment bounds. 
To obtain weak$^\ast$ convergence without an explicit moment bound, Hern\'andez-Lerma and Lasserre fix the objective of their LPs to minimising the integral of a norm-like function. This tacitly guarantees that the optimal points of their approximating LPs satisfy a moment bound. 
Instead, we directly append a moment bound as a constraint to the LP, thus retaining the ability to choose the objective at will without sacrificing the convergence guarantee. By choosing the objective carefully, we are then able to construct approximations with easy-to-compute errors---something that is not possible with the approach in~Refs.~\cite{Lerma1998,Lerma1998b,Hernandez-Lerma1999,Hernandez-Lerma2003}.} The development of numerical  methodologies~\cite{Laurent2009,Lasserre2009,Prajna2002,Henrion2009,Lofberg2004,Zheng2018,Papp2019,Ahmadi2019} that facilitate the computation of moment bounds in the two decades since the original work of
Hern\'andez-Lerma and Lasserre
%~\cite{Lerma1998,Lerma1998b,Hernandez-Lerma1999,Hernandez-Lerma2003} 
has made it possible to include such refinements in optimisation frameworks, and has encouraged us to develop the schemes presented in this paper.

The other main difference with the work of Hern\'andez-Lerma and Lasserre is that our results are focused on countable index sets $\x$, whereas the applications in~\cite{Lerma1998,Lerma1998b,Hernandez-Lerma1999,Hernandez-Lerma2003} involve uncountable ones. To obtain finite-dimensional LPs, Hern\'andez-Lerma and Lasserre discretise the index set using a dense countable subset of the space that contains the measures they wish to approximate. 
As a result of this discretisation, the restrictions of the desired measure are not necessarily feasible points of the finite-dimensional LPs. Hence the schemes in~\cite{Lerma1998,Lerma1998b,Hernandez-Lerma1999,Hernandez-Lerma2003} yield only converging \emph{approximations} instead of converging \emph{bounds}. For a more detailed comparison, see \cite[Section~5.5]{Kuntzthe}.

Finally, we remark that although we focused here on a particular type of LPs {motivated by the problems discussed in Section~\ref{applications}, variations of} our approximation schemes can be applied to other CILPs. The critical ingredients required to guarantee convergence of Schemes A and B are: a moment bound, and boundedness of the entries of the feasible points $\rho$.
% it is not difficult to see from the content in Secs.~\ref{sec:A}--\ref{sec:B}, this section, and the proof of Theorem \ref{convlpt} in Appendix \ref{appendix} that our approximation schemes can be modified to apply to many other CILPs. 
{In particular}, any constraint of the form $\rho(f)=\alpha$ or $\rho(f)\leq \alpha$ with $f$ in $\cal{W}$ may be added to, or removed from, the definition of the feasible set $\L$. {The techniques of Section~\ref{sec:A} carry over if the constraints involve functions $f$ that are are non-negative or have finite supports.  Otherwise, the techniques of Appendix~\ref{shedvarfin} apply.}
% involve only finitely many entries of the feasible points $\rho$, or are of the type $\rho(f)\leq \alpha$ for a nonnegative function $f$. 

%Although we focused here on a particular type of LPs, our approximation schemes can be applied to other CILPs. The critical ingredients required to guarantee convergence of Schemes A and B are a moment bound and the boundedness of the entries of the feasible points $\rho$. {In particular}, any constraint of the form $\rho(f)=\alpha$ or $\rho(f)\leq \alpha$ with $f$ in $\cal{W}$ may be added to, or removed from, the definition of the feasible set $\L$, and the approximation schemes (or their extensions in Appendix~\ref{shedvarfin}) can be carried over.
%

\bibliographystyle{siamplain} 
%\bibliography{library}
\bibliography{cilpbib}

%\newpage
\appendix

\section{Proof of Theorem \ref{solvable}}\label{appendix}
First, we show that $\rho(f)$ is absolutely convergent for any $\rho$ in $\cal{L}$. Replacing $f$ by $\mmag{f}$ in \eqref{eq:markovthe1} we find that
\begin{align}\label{eq:dna7w89haw8feawff23}\rho(\mmag{f})&=\sum_{x\in\x_1}\mmag{f(x)}\rho(x)+\sum_{x\not\in\x_1}\mmag{f(x)}\rho(x)\\
&\leq \sum_{x\in\x_1}\mmag{f(x)}\rho(x)+c\left(\sup_{x\not\in\x_1}\frac{\mmag{f(x)}}{w(x)}\right)<\infty\nonumber\end{align}
where the right-most inequality follows from finiteness of $\x_1$ and our assumption that  $f$ belongs in $\cal{W}$.

Next, we show that \eqref{eq:boundconvthe1} is solvable (i.e.\ Theorem~\ref{solvable}$(ii,iii)$). To do so, we prove in Appendix~\ref{subapp} that the set $\cal{L}$ is weak$^\ast$ sequentially compact in the sense that every sequence of points $(\rho^r)_{r\in\zp}$ contained in $\cal{L}$ has a subsequence $(\rho^{r_k})_{r\in\zp}$ that converges in weak$^\ast$ (c.f. Definition~\ref{def:weak}) to a limit $\rho^\infty$ belonging to $\cal{L}$. We can then set $(\rho^r)_{r\in\zp}$ to be any sequence of feasible points of \eqref{eq:boundconvthe1} satisfying
$$\lim_{r\to\infty}\rho^r(f)=l_f.$$
Because $\cal{L}$ is weak$^\ast$ sequentially compact and $f$ belongs to $\cal{W}$, we can find a subsequence $(\rho^{r_k})_{r\in\zp}$ of $(\rho^r)_{r\in\zp}$ with a limit $\rho^\infty$ belonging to $\cal{L}$ that satisfies
$$\rho^\infty(f)=\lim_{k\to\infty}\rho^{r_k}(f)=l_f.$$
Setting $\rho_*:=\rho_\infty$ and replacing $\rho$ in \eqref{eq:dna7w89haw8feawff23} with $\rho_*$ then shows that \eqref{eq:boundconvthe1} is solvable.

\subsection{$\cal{L}$ is weak$^\ast$ sequentially compact}\label{subapp}
The argument consists of four steps:
\begin{enumerate}[label=(\alph*)]
\item Using a standard diagonal argument to find a pointwise convergent subsequence $(\rho^{r_k})_{k\in\zp}$ of $(\rho^r)_{r\in\zp}$.
\item Using $\rho^r(w)\leq c$ for all $r\in\zp$ and Fatou's lemma to show that the limit $\rho^\infty$ of $(\rho^{r_k})_{k\in\zp}$ is non-negative ($\rho^\infty(x)\geq0$ $\forall x\in\x$) and satisfies $\rho^\infty(w)\leq c$.
\item Using (a,b) and show that $(\rho^{r_k})_{k\in\zp}$ converges in weak$^*$ to $\rho^\infty$.
\item Using (c) to show that $\rho^\infty$ belongs to $\L$.
\end{enumerate}
\vspace{5pt}
Let's begin:
\vspace{5pt}
\begin{enumerate}[label=(\alph*)]
\item Enumerate the elements of $\x$ as $x_1,x_2,\dots$  Assumption~\ref{assumption}$(i,ii,iv)$ and the constraints $\rho H=\phi$, $\rho(g)=1$, and $\rho(w)\leq c$ in \eqref{eq:l2} imply that the sequence $(\rho^{r}(x_1))_{r\in\zp}$ is contained in a bounded interval. For this reason, the Bolzano-Weierstrass Theorem tells us that $(\rho^{r}(x_1))_{r\in\zp}$ has a converging subsequence $(\rho^{r_{k_1}}(x_1))_{{k_1}\in\zp}$. Repeating the same argument for $x_2$ and $(\rho^{r_{{k_1}}})_{{k_1}\in\zp}$, we obtain a convergent subsequence $(\rho^{r_{{k_2}}}(x_2))_{{k_2}\in\zp}$ of $(\rho^{r_{{k_1}}}(x_2))_{{k_1}\in\zp}$, and so on. Setting
$$\rho^{r_{k}}:=\rho^{r_{k_{k}}}\quad\forall k\in\zp,$$
we obtain the desired pointwise convergent subsequence.
\item Given (a), this follows directly from Fatou's lemma.
\item  For  any $f$ in $\cal{W}$ and $r'$ in $\zp$, 
\begin{align}\label{eq:together}&\mmag{\rho^{r_k}(f)-\rho^\infty(f)}\\
&\leq \sum_{x\in\x_{r'}}\mmag{\rho^{r_k}(x)-\rho^\infty(x)}\mmag{f(x)}+\sum_{x\not\in\x_{r'}}(\rho^{r_k}(x)+\rho^\infty(x))\mmag{f(x)}\nonumber.\end{align}
The generalisation~\eqref{eq:markovthe1} of Markov's inequality (with $\mmag{f}$ replacing $f$) and the moment bounds $\rho^\infty(w)\leq c$ and $\rho^{r_k}(w)\leq c$ tell us that
$$\sum_{x\not\in\x_{r'}}(\rho^{r_k}(x)+\rho^\infty(x))\mmag{f(x)}\leq  2c \left(\sup_{x\not\in\x_{r'}}\frac{\mmag{f(x)}}{w(x)}\right).$$
Fix $\varepsilon>0$. Because $f$ belongs to $\cal{W}$, we can find an $r'$ in $\zp$ such that $\sup_{x\not\in\x_{r'}}(\mmag{f(x)}/w(x))\leq \frac{\varepsilon}{4c}$. It follows from the above that
\begin{equation}\label{eq:tail}\sum_{x\not\in\x_{r'}}(\rho^{r_k}(x)+\rho^\infty(x))\mmag{f(x)}\leq  \frac{\varepsilon}{2}.\end{equation}
Because $\x_{r'}$ is a finite set ($w$ is norm-like), the pointwise convergence of $\rho^{r_k}$ to $\rho^\infty$ implies that there exist a $K$ such that
\begin{equation}\label{eq:nontail}\sum_{x\in\x_{r'}}\mmag{\rho^{r_k}(x)-\rho^\infty(x)}\mmag{f(x)}\leq \frac{\varepsilon}{2}\quad\forall k\geq K.\end{equation}
Combining \eqref{eq:together}--\eqref{eq:nontail} yields
$$\mmag{\rho^{r_k}(f)-\rho^\infty(f)}\leq\varepsilon\quad\forall k\geq K.$$
Because the $\varepsilon$ was arbitrary, we have the desired limit:
$$\lim_{k\to\infty}\rho^{r_k}(f)=\rho^\infty(f).$$
Since the above holds for every $f\in\cal{W}$, we have that $\rho^{r_k}$ not only converges pointwise to $\rho$ but also in weak$^*$. %Because the subsequence $(\rho^{r_k})_{k\in\zp}$ was arbitrary, we have that all subsequences of $(\rho^r)_{r\in\zp}$ have a weak$^\ast$ convergent subsequence.
\item Assumption~\ref{assumption}$(ii,iii,v)$ implies that $g$ and $x'\mapsto h(x',x)$ (for any $x\in\x$) belong to $\cal{W}$. For this reason, the weak$^*$ convergence of the subsequence implies that
$$\rho^\infty H(x)=\lim_{k\to\infty}\rho^{r_k} H(x)=\phi(x)\quad\forall x\in\x,\qquad \rho^\infty(g)=\lim_{k\to\infty}\rho^{r_k}(g)=1. $$
Because we already argued in (b) that $\rho^\infty(x)\geq0$ for all $x$ in $\x$ and $\rho^\infty(w)\leq c$, the above shows that that $\rho^\infty$ belongs to $\L$.
\end{enumerate}

\section{Weak$^\ast$ convergence implies convergence in total variation}\label{wtv}
The argument is as follows: given our identification \eqref{eq:not1} of $\ell^1$ with the space of finite signed measures on $(\x,\twx)$, the total variation norm $\norm{\cdot}$ on $\ell^1$ {is} dominated by (in fact, equivalent to)  the $\ell^1$-norm:
$$\norm{\rho}\leq\sum_{x\in\s}\mmag{\rho(x)}=:\norm{\rho}_1\quad\forall\rho\in\ell^1.$$
Because the dual of $(\ell^1,\norm{\cdot}_1)$ is isomorphic to the space of bounded functions on $\x$, Schur's Theorem tells us that a sequence $(\rho^r)_{r\in\zp}\subseteq \ell^1$ converges in the $\ell^1$-norm to $\rho\in\ell^1$ if and only if \eqref{eq:weakstardef} holds for all bounded functions $f$. Because the norm-like assumption implies that all bounded functions belong to $\cal{W}$, it follows that weak$^\ast$ convergence implies convergence in $\ell^1$ and, consequently, in total variation.

\section{Shedding Assumption \ref{assumption}$(v)$}\label{shedvarfin}
While Assumptions~\ref{assumption}$(i$--$iv)$ are very mild and widely satisfied in applications, Assumption~\ref{assumption}$(v)$ requiring that every column of $H$ has finitely many non-zero entries is more restrictive. Indeed, it is not exceedingly rare for a discrete-time chain to possess one or more states that are reachable from infinitely many other states in a single step, in which case the one-step matrix $P$ would violate Assumption~\ref{assumption}$(v)$.
Although Assumption~\ref{assumption}$(v)$ does simplify the content of this paper, it can be circumvented as follows.

The assumption is important for two reasons: 
(a) practically, it ensures that, for each $x$ in $\x$, the equation 
$$\phi(x) = \rho H(x)=\sum_{x'\in\x} \rho(x')h(x',x)=\sum_{x'\in\supp{h(\cdot,x)}}\rho(x')h(x',x)$$
involves only finitely many entries of $\rho$, a key fact in the implementation of our schemes;  and 
(b) theoretically, it ensures that, for each $x$ in $\x$, the function $x'\mapsto h(x',x)$ belongs to $\cal{W}$ in \eqref{eq:grs}. For this reason, the weak$^*$ convergence of the approximating sequences $(\rho^r)_{r\in\zp}$ of Section~\ref{sec:A} guarantees that their limit points $\rho^\infty$ satisfy the equations $\rho H=\phi$, a fact necessary when showing that the limit points belonging to $\L$ (see the proof of Theorem~\ref{convlpt} for details).

This assumption can be avoided using the moment bound \eqref{eq:ggmomb} at the expense of doubling the number of decision variables in our finite-dimensional LPs. To do so, we require a sequence $b_1,b_2,\dots$ of known constants such that
\begin{equation}\label{eq:hbounds}\sup_{x\not\in\x_r}\frac{\sum_{z\in\x_r}h(x,z)}{w(x)}\leq b_r\quad\forall r\in\zp,\quad\lim_{r\to\infty}b_r=0.\end{equation} 
The above implies that $x'\mapsto h(x',x)$ belongs to $\cal{W}$ for each $x\in\x$ and we recover (b). 
To recover finite-dimensional outer approximations of $\L$ of the sort in Section~\ref{sec:A}, pick any index $x$ in our truncation $\x_r$ and consider its associated equation:
\begin{equation}\label{eq:modeqsp}\rho H(x)=\sum_{x'\in\x_r}\rho(x')h(x',x)+\epsilon_r(x)=\phi(x),\end{equation}
where the measure $\epsilon_r$ is defined by
$$\epsilon_r(x):=\left\{\begin{array}{ll}\sum_{x'\not\in\x_r}\rho(x')h(x',x)&\text{if }x\in\x_r\\
0&\text{if }x\not\in\x_r\end{array}\right.\quad\forall x\in\x.$$
Because $H$ is Metzler and any feasible point $\rho$ of $\L$ is non-negative, we have that
\begin{equation}\label{eq:epcv}\epsilon_r(x)\geq0\quad\forall x\in\x.\end{equation}
Tonelli's Theorem, the generalisation~\eqref{eq:markovthe1} of Markov's inequality, and \eqref{eq:hbounds} imply
\begin{align}\epsilon_r(\x_r)&=\sum_{z\in\x_r}\epsilon_r(z)=\sum_{x\not\in\x_r}\rho(x)\left(\sum_{z\in\x_r}h(x,z)\right)\leq b_r\left(\sum_{x\not\in\x_r}\rho(x)w(x)\right)\nonumber\\
&\leq b_r\rho(w)\leq cb_r,\qquad \forall \rho\in\L.\label{eq:epbouns}\end{align}
Putting \eqref{eq:rhormomb}, \eqref{eq:gconstrel}, and \eqref{eq:modeqsp}--\eqref{eq:epbouns} together, we recover a finite dimensional outer approximation of $\L$: if $\rho$ belongs to $\L$, then the pair $(\rho_{|r},\epsilon_r)$ belongs to 
$$\tilde{\L}_r:=\left\{(\rho^r,\epsilon^r)\in\ell^1\times\ell^1: \begin{array}{l} \rho^r H(x)+\epsilon^r(x)=\phi(x),\quad\forall x\in\x_r,\\ 1-ca_r\leq\rho^r(g)\leq 1, \\ \epsilon^r(\x_r)\leq cb_r,\quad\rho^r(w)\leq c,\\ \rho^r(x)\geq0,\quad \epsilon^r(x)\geq0,\quad\forall x\in\x, \\\rho^r(\x_r^c)+\epsilon^r(\x_r^c)=0. \end{array}\right\}.$$
For this reason, replacing Assumption~\ref{assumption}$(v)$ with `there exists a known sequence $(b_r)_{r\in\zp}$ such that \eqref{eq:hbounds} holds', $\L_r$ with $\tilde{\L}_r$, and $l^r_f,u^r_f$ in \eqref{eq:favet1}--\eqref{eq:favet2} with
$$\tilde{l}^r_f:=\inf\{{\rho^r}(f):(\rho^r,\epsilon^r)\in\tilde{\L}_r\},\qquad \tilde{u}^r_f:=\inf\{{\rho^r}(f):(\rho^r,\epsilon^r)\in\tilde{\L}_r\},$$
the results of Sections \ref{sec:A}--\ref{sec:B} hold identically. The only difference is that in contrast with $\L_r$ in \eqref{eq:lpap}, $\tilde{\L}_r$  involves $2|\x_r|$ variables decision variables, $|\x_r|$ equalities, and $2|\x_r|+4$ inequalities. 

\end{document}